\newtheorem{theorem}{Theorem}
\newtheorem{lemma}[theorem]{Lemma}
\newtheorem{prop}[theorem]{Proposition}
\newcommand{\PV}{\mathop{\rm P.V.}\nolimits}
\newcommand{\re}{\mathrm{e}}
\providecommand{\abs}[1]{\lvert#1\rvert}
\providecommand{\norm}[1]{\lVert#1\rVert}
\numberwithin{equation}{section}
\numberwithin{theorem}{section}
\numberwithin{figure}{subsection}
\numberwithin{table}{subsection}
\title[Blowup of a 1D Model for 3D Euler]{On the Finite-Time Blowup of a 1D Model for the 3D Incompressible Euler Equations}
\author{Thomas Y. Hou$^{\dag}$ \and Guo Luo$^{\dag}$}
\thanks{$\dag$: Applied and Computational Mathematics, California Institute of Technology.}
\begin{document}
\maketitle

\begin{center}
\today
\end{center}

\section*{Abstract}
We study a 1D model for the 3D incompressible Euler equations in axisymmetric geometries, which can be viewed as a local approximation to the Euler equations near the solid boundary of a cylindrical domain. We prove the local well-posedness of the model
in spaces of zero-mean functions, and study the potential formation of a finite-time singularity under certain convexity conditions for the velocity field. It is hoped that the results obtained on the 1D model will be useful in the analysis of the full
3D problem, whose loss of regularity in finite time has been observed in a recent numerical study \citep{lh2013}.

\section{Background}\label{sec_intro}
The purpose of this note is to summarize some of the results we obtained on a 1D model for the 3D incompressible Euler equations. In a recently completed computation \citep{lh2013}, we have numerically studied the 3D Euler equations in axisymmetric
geometries and identified a class of potentially singular solutions. The equations being solved take the form
\begin{subequations}\label{eqn_eat}
\begin{align}
  u_{1,t} + u^{r} u_{1,r} + u^{z} u_{1,z} & = 2 u_{1} \psi_{1,z}, \label{eqn_eat_u} \\
  \omega_{1,t} + u^{r} \omega_{1,r} + u^{z} \omega_{1,z} & = (u_{1}^{2})_{z}, \label{eqn_eat_w} \\
  -\bigl[ \partial_{r}^{2} + (3/r) \partial_{r} + \partial_{z}^{2} \bigr] \psi_{1} & = \omega_{1}, \label{eqn_eat_psi}
\end{align}
\end{subequations}
where
\begin{displaymath}
  u_{1} = u^{\theta}/r,\qquad \omega_{1} = \omega^{\theta}/r,\qquad \psi_{1} = \psi^{\theta}/r,
\end{displaymath}
are transformed angular velocity, vorticity, and stream functions and
\begin{displaymath}
  u^{r} = -r \psi_{1,z},\qquad u^{z} = 2\psi_{1} + r \psi_{1,r},
\end{displaymath}
are radial and axial velocity components. The solutions of \eqref{eqn_eat} were computed in the cylinder
\begin{displaymath}
  D(1,L) = \Bigl\{ (r,z)\colon 0 \leq r \leq 1,\ 0 \leq z \leq L \Bigr\},
\end{displaymath}
with carefully chosen initial data and no-flow (in $r$) and periodic (in $z$) boundary conditions. It was observed that the vorticity function $\abs{\omega}$ develops a point singularity in finite time at the corner $\tilde{q}_{0} = (1,0)^{T}$, which
corresponds to a ``singularity ring'' on the solid boundary of the cylinder. The numerical data has been carefully checked against all major blowup (non-blowup) criteria including Beale-Kato-Majda \citep{bkm1984}, Constantin-Fefferman-Majda
\citep{cfm1996}, and Deng-Hou-Yu \citep{dhy2005}, to confirm the validity of the singularity. A local analysis near the point of the singularity also suggests the existence of a self-similar blowup. The interested readers are referred to \citet{lh2013}
for more details.

\section{The 1D Model and Its Well-Posedness}\label{sec_model}
For the particular solution studied in \citet{lh2013}, it is observed that, near the point of the singularity $\tilde{q}_{0} = (1,0)^{T}$, the axial velocity $u^{z}$ is negative when $z > 0$ and positive when $z < 0$. This creates a compression mechanism
along the wall which seems to be responsible for the generation of the finite-time singularity. Motivated by these observations, we consider in this note the following 1D model
\begin{subequations}\label{eqn_eat_1d}
\begin{align}
  u_{t} + v u_{z} & = 0,\qquad z \in (0,L), \label{eqn_eat_1d_u} \\
  \omega_{t} + v \omega_{z} & = u_{z}, \label{eqn_eat_1d_w}
\end{align}
with the nonlocal, zero-mean velocity $v$ determined by
\begin{equation}
  v_{z}(z) = H\omega(z) := \frac{1}{L} \PV\int_{0}^{L} \omega(y) \cot \bigl[ \mu (z-y) \bigr]\,dy,\qquad \mu = \pi/L.
  \label{eqn_eat_1d_vz}
\end{equation}
\end{subequations}
The problem is complemented by periodic boundary conditions and zero-mean initial data.

This 1D model can be viewed as the ``restriction'' of the 3D axisymmetric Euler equations \eqref{eqn_eat} to the wall $r = 1$, with the identification
\begin{displaymath}
  u(z) \sim u_{1}^{2}(1,z),\qquad \omega(z) \sim \omega_{1}(1,z),\qquad v(z) \sim \psi_{1,r}(1,z).
\end{displaymath}
Indeed, the no-flow boundary condition ($\psi_{1}(1,z) = 0$) implies that
\begin{displaymath}
  u^{r} = -r \psi_{1,z} = 0\qquad \text{on}\qquad r = 1,
\end{displaymath}
hence the evolution equations \eqref{eqn_eat_u}--\eqref{eqn_eat_w} reduce to \eqref{eqn_eat_1d_u}--\eqref{eqn_eat_1d_w} on the wall. To define the velocity $v$, we observe that
\begin{displaymath}
  \psi_{1,r}(r,z) \ll \psi_{1,rr}(r,z),\qquad \omega_{1}(r,z) \approx \omega_{1}(1,z),
\end{displaymath}
near the point of the singularity \citep{lh2013}. Hence the Poisson equation \eqref{eqn_eat_psi} can be locally approximated by
\begin{displaymath}
  -\bigl[ \partial_{r}^{2} + \partial_{z}^{2} \bigr] \psi_{1} = \omega_{1}(1,z),
\end{displaymath}
the solution of which on the stretched domain $r \in (-\infty,1)$ satisfies
\begin{displaymath}
  \psi_{1,rz}(1,z) = H\omega_{1}(1,z).
\end{displaymath}
This is precisely equation \eqref{eqn_eat_1d_vz} which provides the key relation needed to close \eqref{eqn_eat_1d_u}--\eqref{eqn_eat_1d_w}.

Problems similar to \eqref{eqn_eat_1d_w} have been studied in the past as models for the 3D Euler equations. In \citet{clm1985}, the equation
\begin{subequations}\label{eqn_m}
\begin{equation}
  \omega_{t} - v_{x} \omega = 0,\qquad v_{x} = H\omega,
  \label{eqn_m_clm}
\end{equation}
was introduced as a model for the dynamics of vorticity in incompressible inviscid flows. The finite-time blowup of \eqref{eqn_m_clm} was established for a large class of initial data as a consequence of the explicit solution formula
\begin{displaymath}
  \omega(x,t) = \frac{4\omega_{0}(x)}{[2-tH\omega_{0}(x)]^{2} + t^{2} \omega_{0}^{2}(x)}.
\end{displaymath}
In \citet{degreg1990,degreg1996}, the model \eqref{eqn_m_clm} was modified to include a convection term:
\begin{equation}
  \omega_{t} + v\omega_{x} - v_{x} \omega = 0,\qquad v_{x} = H\omega,
  \label{eqn_m_degreg}
\end{equation}
and the resulting problem was conjectured to admit globally regular solutions. In \citet{ccf2005}, the equation
\begin{equation}
  \theta_{t} + \theta_{x} H\theta = 0,
  \label{eqn_m_ccf}
\end{equation}
was proposed as the simplest model for transport equations with a nonlocal velocity. The finite-time blowup of \eqref{eqn_m_ccf} was rigorously proved for a large class of initial data as a consequence of the estimate
\begin{displaymath}
  -\int_{0}^{\infty} \frac{f_{x}(x) Hf(x)}{x^{1+\delta}}\,dx \geq C_{\delta} \int_{0}^{\infty} \frac{f^{2}(x)}{x^{2+\delta}}\,dx,\qquad \forall \delta \in (0,1),
\end{displaymath}
which holds true for any even function $f$ decaying sufficiently fast at $\infty$ and vanishing at 0 \citep[see also][]{ccf2006}. In \citet{osw2008}, a generalization of the models \eqref{eqn_m_clm}--\eqref{eqn_m_ccf}:
\begin{equation}
  \omega_{t} + av\omega_{x} - v_{x} \omega = 0,\qquad v_{x} = H\omega,
  \label{eqn_m_osw}
\end{equation}
was studied. The model reduces to \eqref{eqn_m_clm} if $a = 0$, to \eqref{eqn_m_degreg} if $a = 1$, and to \eqref{eqn_m_ccf} if $a = -1$ and $\omega = -\theta_{x}$. The global regularity of \eqref{eqn_m_osw} was numerically demonstrated for the case $a =
1$ and was rigorously proved in the limit of $a \to \infty$, in which case \eqref{eqn_m_osw} reduces to
\begin{displaymath}
  \omega_{t} + v\omega_{x} = 0,\qquad v_{x} = H\omega.
\end{displaymath}
Other similar models were also proposed and analyzed in the literature. The interested readers are referred to \citet{cccf2005,cc2008,cc2010,ccg2010} for further readings.
\end{subequations}

Compared with the existing models, the 1D model \eqref{eqn_eat_1d} is distinct in that it consists of a \emph{system} of two equations while all other models considered so far are \emph{scalar} equations. In addition, the 1D model \eqref{eqn_eat_1d}
provides a natural approximation to the 3D axisymmetric Euler equations along the wall $r = 1$, while no such explicit connection exists in other models. The purpose of this note is to study the basic properties of \eqref{eqn_eat_1d} including its
(local) well-posedness and potential finite-time blowup. It is hoped that the results will be useful in the analysis of the full problem \eqref{eqn_eat}.

To study the well-posedness of the 1D model \eqref{eqn_eat_1d}, define
\begin{displaymath}
  V^{k}(S) = \Bigl\{ f\colon f \in H^{k}(S),\ \bar{f} = 0 \Bigr\},\qquad k \geq 0,
\end{displaymath}
where $S$ denotes the circle on the plane with circumference $L$, $H^{k}(S)$ the usual (real) Sobolev space on $S$, and
\begin{displaymath}
  \bar{f} := \frac{1}{L} \int_{0}^{L} f(z)\,dz
\end{displaymath}
the mean of $f$ on $S$. In view of the zero-mean property of functions in $V^{k}(S)$ and Poincar\'{e}'s inequality (see \eqref{eqn_cal_fact_p}), a suitable norm on $V^{k}(S)$ can be chosen as
\begin{displaymath}
  \norm{f}_{V^{k}} = \biggl[ \int_{0}^{L} \abs{\partial_{z}^{k} f(z)}^{2}\,dz \biggr]^{1/2},\qquad f \in V^{k}(S),
\end{displaymath}
with associated inner product
\begin{displaymath}
  (f,g)_{V^{k}} = \int_{0}^{L} \partial_{z}^{k} f(z) \cdot \partial_{z}^{k} g(z)\,dz,\qquad f,\, g \in V^{k}(S).
\end{displaymath}

The (local) well-posedness of the 1D model \eqref{eqn_eat_1d} is contained in the following three theorems.

\begin{theorem}[Local existence and uniqueness]
\begin{subequations}\label{eqn_1d_sol_sp}
Let $m \geq 1$ be any positive integer. For any initial data
\begin{equation}
  u_{0} \in V^{m+1}(S),\qquad \omega_{0} \in V^{m}(S),
  \label{eqn_1d_sol_sp_0}
\end{equation}
there exists $T > 0$ depending only on $\norm{u_{0}}_{V^{m+1}}$ and $\norm{\omega_{0}}_{V^{m}}$ such that the 1D model \eqref{eqn_eat_1d} has a unique solution
\begin{align}
  u & \in C([0,T]; V^{m+1}(S)) \cap C^{1}([0,T]; V^{m}(S)), \nonumber \\
  \omega & \in C([0,T]; V^{m}(S)) \cap C^{1}([0,T]; V^{m-1}(S)). \label{eqn_1d_sol_sp_t}
\end{align}
\end{subequations}
\label{thm_1d_loc_ex}
\end{theorem}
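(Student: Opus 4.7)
The plan is a classical Picard iteration adapted to the coupled system. Set $(u^{(0)}, \omega^{(0)}) := (u_0, \omega_0)$ and, given $(u^{(n)}, \omega^{(n)})$, let $v^{(n)}$ be the zero-mean primitive of $H\omega^{(n)}$; then obtain $(u^{(n+1)}, \omega^{(n+1)})$ by solving the linear transport system
\begin{align*}
  u^{(n+1)}_{t} + v^{(n)} u^{(n+1)}_{z} &= 0, \\
  \omega^{(n+1)}_{t} + v^{(n)} \omega^{(n+1)}_{z} &= u^{(n+1)}_{z},
\end{align*}
with the common initial data $(u_0,\omega_0)$. The operator $H$ in \eqref{eqn_eat_1d_vz} is a zero-order Fourier multiplier on the circle (it acts by $-i\,\sgn(n)$ on the $n$-th Fourier coefficient and annihilates constants), so $\|H\omega\|_{H^k}\le C\|\omega\|_{H^k}$ for every $k\ge 0$; combined with the zero-mean choice of $v^{(n)}$ this gives $\|v^{(n)}\|_{V^{m+1}}\le C\|\omega^{(n)}\|_{V^m}$. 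Each iteration is then a pair of linear transport equations with a smooth prescribed velocity, classically solved by the method of characteristics.

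The first substantive task is to produce a time $T>0$ and a constant $M$, depending only on $\|u_0\|_{V^{m+1}}$ and $\|\omega_0\|_{V^m}$, such that $\sup_{[0,T]}(\|u^{(n)}\|_{V^{m+1}} + \|\omega^{(n)}\|_{V^m})\le M$ for every $n$. I apply $\partial_z^{m+1}$ to the $u$-equation and $\partial_z^{m}$ to the $\omega$-equation, pair with $\partial_z^{m+1}u^{(n+1)}$ and $\partial_z^{m}\omega^{(n+1)}$ respectively, and integrate over $S$. The top-order transport contribution reduces after integration by parts to a term of the form $\tfrac12\int v^{(n)}_z (\partial_z^{m+1} u^{(n+1)})^2$, controlled by $\|v^{(n)}_z\|_{L^\infty}\|u^{(n+1)}\|_{V^{m+1}}^2$; the remaining lower-order pieces are commutators $[\partial_z^k, v^{(n)}]$, handled by standard Kato--Ponce / Moser-type tame estimates. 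The source $\partial_z^{m+1}u^{(n+1)}$ in the $\omega$-equation is precisely what forces the asymmetric regularity $V^{m+1}\times V^m$. Assembling the bounds yields a closed inequality
\[
  \tfrac{d}{dt}\bigl(\|u^{(n+1)}\|_{V^{m+1}}^2 + \|\omega^{(n+1)}\|_{V^m}^2\bigr) \le C\bigl(\|\omega^{(n)}\|_{V^m}\bigr)\bigl(\|u^{(n+1)}\|_{V^{m+1}}^2 + \|\omega^{(n+1)}\|_{V^m}^2\bigr),
\]
from which induction on $n$ together with Gronwall's inequality delivers the desired uniform bound on a sufficiently small interval $[0,T]$.

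Convergence of the iterates is next obtained in the weaker space $C([0,T];V^m\times V^{m-1})$. The differences $\delta u^{(n)}:=u^{(n+1)}-u^{(n)}$, $\delta\omega^{(n)}:=\omega^{(n+1)}-\omega^{(n)}$, $\delta v^{(n-1)}:=v^{(n)}-v^{(n-1)}$ satisfy linear transport equations with forcing by $\delta v^{(n-1)}$, namely
\begin{align*}
  \delta u^{(n)}_t + v^{(n)}\delta u^{(n)}_z &= -\delta v^{(n-1)} u^{(n)}_z, \\
  \delta\omega^{(n)}_t + v^{(n)}\delta\omega^{(n)}_z &= \delta u^{(n)}_z - \delta v^{(n-1)} \omega^{(n)}_z,
\end{align*}
with zero initial data. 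Using $\|\delta v^{(n-1)}\|_{V^m}\le C\|\delta\omega^{(n-1)}\|_{V^{m-1}}$ and the uniform higher-order bounds already in hand, energy estimates at the $V^m\times V^{m-1}$ level give a recursion with Lipschitz constant $\le CTe^{CMT}$; shrinking $T$ forces a contraction, hence a Cauchy sequence with strong limit $(u,\omega)$ in this lower norm. Weak-$*$ compactness places the limit in $L^\infty([0,T];V^{m+1}\times V^m)$, and interpolation between the strong lower-norm convergence and the uniform higher-norm bound suffices to pass to the limit in every nonlinear term. Uniqueness is obtained by applying the same difference argument to any two hypothetical solutions.

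The step I expect to be technically most delicate is upgrading the result to the \emph{strong} time continuity asserted in \eqref{eqn_1d_sol_sp_t}: the compactness argument directly gives only weak continuity in $V^{m+1}\times V^m$. To obtain $u\in C([0,T];V^{m+1})$ and $\omega\in C([0,T];V^m)$ I would invoke a Bona--Smith regularisation: mollify the initial data to $V^{m+2}\times V^{m+1}$, obtain smooth solutions with uniformly bounded (in a suitable sense) norms, and combine continuity of the solution map in the weaker norm with the uniform bound in the stronger norm to squeeze out strong continuity. The asymmetric two-tier structure of the system (with $u$ always one derivative above $\omega$) has to be threaded consistently through this argument, since the source $u_z$ in the $\omega$-equation couples the two levels of regularity. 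Once the $C^0$-in-time part is secured, the $C^1$-in-time part of \eqref{eqn_1d_sol_sp_t} is automatic from the PDE itself, because $u_t=-vu_z$ and $\omega_t=-v\omega_z+u_z$ express the time derivatives as continuous maps into $V^m$ and $V^{m-1}$, respectively.
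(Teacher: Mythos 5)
Your proposal is correct in outline but follows a genuinely different route from the paper. The paper does not iterate: it verifies the hypotheses of the abstract Kato--Lai existence theorem for the operator $A(h)=(vu_z,\,v\omega_z-u_z)$ on the triplet $W^{2m}\subset W^m\subset W^0$ (boundedness/strong continuity of $A$ and semi-boundedness of the pairing $\langle h,A(h)\rangle$, Propositions \ref{prop_cal_ineq_1}--\ref{prop_cal_ineq_2}), gets uniqueness from a Gronwall estimate in the low norm $W^0$, and then upgrades weak to strong time continuity \emph{without} any regularization: the majorant $\rho(t)$ from Theorem \ref{thm_abs_evol} gives $\limsup_{t\to0^+}\norm{h(t)}_{H}\le\norm{h_0}_H$, weak continuity gives the reverse inequality, uniqueness propagates right continuity to every $t_0$, and time reversibility handles the left limits. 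Your Picard iteration on the linearized transport system, with uniform $V^{m+1}\times V^m$ bounds, contraction one derivative down, and passage to the limit, is the classical alternative the paper itself alludes to ("successive approximation"), and it is more constructive and self-contained; your energy and commutator estimates close correctly at the asymmetric regularity level, including the $m=1$ case where the commutator degenerates to $v_z\omega_z$. The one place where your plan is heavier and needs care is the step you flagged: Bona--Smith regularization for strong continuity. Note that when the paper runs the analogous mollification argument (Proposition \ref{prop_c_dep_conv}, for continuous dependence) it assumes $m\ge3$, because its difference estimate \eqref{eqn_c_dep_est} is anchored at the $W^2$ level and needs $\norm{h^{\epsilon}}_{W^{3}}$ under control; to cover $m=1,2$ you would have to re-anchor the difference estimates at a lower base norm (e.g.\ $W^0$ or $W^1$), which is doable but not automatic. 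Alternatively, you could replace Bona--Smith entirely by the paper's lighter argument: your a priori energy inequality already yields upper semicontinuity of $\norm{u(t)}_{V^{m+1}}^2+\norm{\omega(t)}_{V^m}^2$ at the initial time, which together with weak continuity, uniqueness, and reversibility gives \eqref{eqn_1d_sol_sp_t} for all $m\ge1$; the $C^1$-in-time statement then follows from the equation exactly as you say.
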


We say the solution $(u,\omega)$ belongs to class $CV^{m}$ on $[0,T]$ if it satisfies \eqref{eqn_1d_sol_sp_t}.

\begin{theorem}[Regularity]
Let $m \geq 1$ be any positive integer and let
\begin{displaymath}
  u \in C([0,T]; V^{2}(S)),\qquad \omega \in C([0,T]; V^{1}(S)),
\end{displaymath}
be a solution of \eqref{eqn_eat_1d} with initial data $u_{0} \in V^{m+1}(S),\ \omega_{0} \in V^{m}(S)$. Then
\begin{displaymath}
  u \in C([0,T]; V^{m+1}(S)),\qquad \omega \in C([0,T]; V^{m}(S)).
\end{displaymath}
In particular, $u(\cdot,t),\, \omega(\cdot,t) \in C^{\infty}(S)$ for each $t \in [0,T]$ if $u_{0},\, \omega_{0} \in C^{\infty}(S)$.
\label{thm_1d_reg}
\end{theorem}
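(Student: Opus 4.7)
The plan is to argue by induction on $m \geq 1$. The base case $m=1$ is precisely the standing hypothesis. For the inductive step, assume the theorem holds for index $m-1$; then, under the stronger initial regularity $u_{0} \in V^{m+1}$, $\omega_{0} \in V^{m}$, the inductive hypothesis already gives $u \in C([0,T];V^{m})$ and $\omega \in C([0,T];V^{m-1})$, and one needs to gain one more derivative throughout $[0,T]$.

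First I would combine local existence with uniqueness to propagate the high regularity on a short time interval. By Theorem~\ref{thm_1d_loc_ex}, the data $(u_{0},\omega_{0})$ generates a local solution $(\tilde{u},\tilde{\omega})$ of class $CV^{m}$ on some $[0,T^{*}]$; being a fortiori of class $CV^{1}$, uniqueness in the latter class forces $(\tilde{u},\tilde{\omega})=(u,\omega)$ on $[0,T^{*}]$. Letting $T^{**}\in(0,T]$ be the supremum of times $\tau$ such that $(u,\omega)$ is of class $CV^{m}$ on $[0,\tau]$, the task reduces to ruling out $T^{**}<T$.

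The heart of the argument is an a priori energy estimate on this regular branch. Differentiating \eqref{eqn_eat_1d_u} $m+1$ times and \eqref{eqn_eat_1d_w} $m$ times in $z$, then testing against $\partial_{z}^{m+1}u$ and $\partial_{z}^{m}\omega$ and integrating by parts, one obtains
\begin{align*}
  \tfrac{1}{2}\tfrac{d}{dt}\norm{u}_{V^{m+1}}^{2} &= \tfrac{1}{2}\!\int_{0}^{L}\! v_{z}(\partial_{z}^{m+1}u)^{2}\,dz - \!\int_{0}^{L}\! \partial_{z}^{m+1}u\,[\partial_{z}^{m+1},v]u_{z}\,dz, \\
  \tfrac{1}{2}\tfrac{d}{dt}\norm{\omega}_{V^{m}}^{2} &= \!\int_{0}^{L}\! \partial_{z}^{m}\omega\,\partial_{z}^{m+1}u\,dz + \tfrac{1}{2}\!\int_{0}^{L}\! v_{z}(\partial_{z}^{m}\omega)^{2}\,dz - \!\int_{0}^{L}\! \partial_{z}^{m}\omega\,[\partial_{z}^{m},v]\omega_{z}\,dz.
\end{align*}
The transport terms are handled by $\norm{v_{z}}_{L^{\infty}} \leq C\norm{\omega}_{V^{1}}$ (the one-dimensional Sobolev embedding $V^{1}\hookrightarrow L^{\infty}$), while each commutator is controlled by a Kato--Ponce inequality together with the identity $v_{z}=H\omega$ and the $L^{2}$-boundedness of $H$. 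After redistributing derivatives, every term on the right-hand side of the resulting differential inequality appears as $\norm{u}_{V^{m+1}}^{2}+\norm{\omega}_{V^{m}}^{2}$ multiplied by a coefficient depending only on $\norm{u}_{V^{m}}$ and $\norm{\omega}_{V^{m-1}}$, which are bounded on $[0,T]$ by the inductive hypothesis. Gronwall's inequality then keeps $\norm{u(t)}_{V^{m+1}}^{2}+\norm{\omega(t)}_{V^{m}}^{2}$ bounded as $t\uparrow T^{**}$, and re-invoking Theorem~\ref{thm_1d_loc_ex} at $t=T^{**}$ extends the $CV^{m}$ regularity past $T^{**}$---a contradiction unless $T^{**}=T$. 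The $C^{\infty}$ statement for smooth data follows by letting $m\to\infty$.

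The main obstacle I anticipate is packaging the commutator estimates so that the Gronwall coefficient depends \emph{solely} on norms already controlled by induction. The delicate case is small $m$ (particularly $m=2$), where neither $\norm{u_{z}}_{L^{\infty}}$ nor $\norm{\omega_{z}}_{L^{\infty}}$ can be relegated to a strictly subcritical norm; here one must split each Leibniz summand so that exactly one factor carries the target order while the other is of lower order and bounded by the inductive hypothesis. A secondary technical issue is justifying the differentiations and integrations by parts, which is handled by first carrying them out on the $CV^{m}$ branch supplied by Theorem~\ref{thm_1d_loc_ex} and then extending via the continuation argument above.
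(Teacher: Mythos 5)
Your proposal is correct in substance, but it is organized differently from the paper's proof, so a comparison is worth recording. The paper does not induct on $m$: it proves a single a priori estimate (Proposition \ref{prop_reg_hm}) bounding $\norm{h(t)}_{W^{k}}$ for every $k\leq m$ in terms of the low norm of the data and the quantity $M_{0}(T)=\exp\{\int_{0}^{T}\norm{H\omega}_{L^{\infty}}\,dt\}$, which the hypothesis $\omega\in C([0,T];V^{1}(S))$ controls via Sobolev embedding. To key everything to $M_{0}$ at the lowest level, the paper first bounds $\norm{u_{z}}_{L^{\infty}}$ by integrating $u_{zt}+vu_{zz}=-v_{z}u_{z}$ along characteristics and then runs a finite cascade $W^{1}\to W^{2}\to W^{k}$; the payoff of this sharper formulation is that the same proposition later drives the Beale--Kato--Majda-type criterion (Theorem \ref{thm_1d_bkm}). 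Your induction on $m$ dispenses with the characteristics step: for $m\geq 2$ the factors $\norm{u_{z}}_{L^{\infty}}$, $\norm{\omega_{z}}_{L^{\infty}}$, $\norm{H\omega}_{L^{\infty}}$ appearing in your commutator bounds (which are exactly the paper's inequality \eqref{eqn_cal_ineq_0_fgd}, the same tool used in Lemma \ref{lmm_cal_ineq_2}) are dominated by $\norm{u}_{V^{m}}$ and $\norm{\omega}_{V^{m-1}}$, already bounded on all of $[0,T]$ by the inductive hypothesis; your worry about $m=2$ resolves exactly as you suggest, since in the $\omega$-commutator the top-order factor $\norm{\omega_{z}}_{L^{\infty}}\leq C\norm{\omega}_{V^{2}}$ is paired with the controlled factor $\norm{v}_{V^{2}}=\norm{\omega}_{V^{1}}$. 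Both proofs then finish with the identical continuation argument (uniform $W^{m}$ bound on the regular branch, existence time from Theorem \ref{thm_1d_loc_ex} depending only on the data norm, uniqueness). Two small points to tighten: the restart must be made at some $t_{0}<T^{**}$ with $t_{0}+\delta>T^{**}$, not literally ``at $t=T^{**}$'', where $W^{m}$ regularity is not yet known; and the top-order energy identity is not justified merely by working on the $CV^{m}$ branch, since before integration by parts it involves $m+2$ derivatives of $u$ --- one still needs a regularization or density argument, e.g. the paper's device of viewing $h$ as the solution of the linear transport problem \eqref{eqn_1d_lin} with given velocity $v$ and smoothing the data.
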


In essence, the regularity theorem says that the existence interval $[0,T]$ of the solution depends only on the low-norm $\norm{u_{0}}_{V^{2}},\ \norm{\omega_{0}}_{V^{1}}$ of the initial data.

\begin{theorem}[Continuous dependence]
Let $m \geq 3$ be a positive integer and let
\begin{displaymath}
  u \in C([0,T]; V^{m+1}(S)),\qquad \omega \in C([0,T]; V^{m}(S)),
\end{displaymath}
be a solution of \eqref{eqn_eat_1d} with initial data $u_{0} \in V^{m+1}(S),\ \omega_{0} \in V^{m}(S)$. Let
\begin{displaymath}
  u_{0,j} \in V^{m+1}(S),\quad \omega_{0,j} \in V^{m}(S),\qquad j = 1,2,\dotsc,
\end{displaymath}
be a sequence of functions such that $u_{0,j} \to u_{0}$ in $V^{m+1}$ and $\omega_{0,j} \to \omega_{0}$ in $V^{m}$. Then there exists $T' \in (0,T]$ and solutions
\begin{displaymath}
  u_{j} \in C([0,T']; V^{m+1}(S)),\qquad \omega_{j} \in C([0,T']; V^{m}(S)),
\end{displaymath}
of \eqref{eqn_eat_1d} with initial data $(u_{0,j},\omega_{0,j})$ for sufficiently large $j$, such that
\begin{displaymath}
  u_{j} \to u\quad \text{in}\quad C([0,T']; V^{m+1}(S)),\qquad \omega_{j} \to \omega\quad \text{in}\quad C([0,T']; V^{m}(S)).
\end{displaymath}
\label{thm_1d_c_dep}
\end{theorem}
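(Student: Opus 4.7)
The plan is to combine Theorems \ref{thm_1d_loc_ex} and \ref{thm_1d_reg} to obtain a common existence interval for the approximating solutions, to prove convergence in a low norm by a standard energy estimate on the difference equations, and then to upgrade to convergence in the target high norm via a Bona--Smith--type regularization argument.

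First, since $m \geq 3$, the embeddings $V^{m+1}(S) \hookrightarrow V^2(S)$ and $V^m(S) \hookrightarrow V^1(S)$ together with the convergence hypotheses imply that $\norm{u_{0,j}}_{V^2}$ and $\norm{\omega_{0,j}}_{V^1}$ are uniformly bounded in $j$. Applying Theorem \ref{thm_1d_loc_ex} to $(u_{0,j},\omega_{0,j})$ for each sufficiently large $j$ produces a class-$CV^m$ solution $(u_j,\omega_j)$, and Theorem \ref{thm_1d_reg} guarantees that its existence interval depends only on the two low norms above. Consequently, there exists a common interval $[0,T']$ with $T' \in (0,T]$ on which all $(u_j,\omega_j)$ are defined, and on which the a priori estimates underlying Theorems \ref{thm_1d_loc_ex}--\ref{thm_1d_reg} yield a uniform bound
\begin{equation*}
  \sup_{j}\, \sup_{t \in [0,T']}\, \bigl[ \norm{u_j(\cdot,t)}_{V^{m+1}} + \norm{\omega_j(\cdot,t)}_{V^m} \bigr] \leq M.
\end{equation*}
The differences $\tilde u_j = u_j - u$, $\tilde\omega_j = \omega_j - \omega$, $\tilde v_j = v_j - v$ satisfy
\begin{align*}
  \tilde u_{j,t} + v_j\, \tilde u_{j,z} &= -\tilde v_j\, u_z, \\
  \tilde\omega_{j,t} + v_j\, \tilde\omega_{j,z} &= \tilde u_{j,z} - \tilde v_j\, \omega_z,
\end{align*}
with $\tilde v_{j,z} = H\tilde\omega_j$. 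An energy estimate at the low level $V^2 \times V^1$, using the $L^2$-boundedness of $H$ and controlling all coefficients in $L^\infty(S)$ via Sobolev embedding and $M$, yields a Gr\"onwall-type inequality for $\norm{\tilde u_j}_{V^2}^2 + \norm{\tilde\omega_j}_{V^1}^2$ and hence convergence in $C([0,T']; V^2(S) \times V^1(S))$.

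The main obstacle is upgrading this to convergence in the target space $V^{m+1}(S) \times V^m(S)$: a direct energy estimate at that level produces a term of the form $(\partial_z^{m+1}(\tilde v_j\, u_z),\, \partial_z^{m+1}\tilde u_j)_{L^2}$ which would require $u \in V^{m+2}(S)$, a regularity that is not available. To circumvent this loss of one derivative I use a Bona--Smith regularization: mollify the initial data at scale $\epsilon > 0$ to obtain $u_0^\epsilon, u_{0,j}^\epsilon \in V^{m+2}(S)$ and $\omega_0^\epsilon, \omega_{0,j}^\epsilon \in V^{m+1}(S)$, whose corresponding solutions $(u^\epsilon,\omega^\epsilon)$ and $(u_j^\epsilon,\omega_j^\epsilon)$ exist on $[0,T']$ and are one order smoother by Theorem \ref{thm_1d_reg}. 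The triangle inequality
\begin{equation*}
  \norm{u_j - u}_{V^{m+1}} \leq \norm{u_j - u_j^\epsilon}_{V^{m+1}} + \norm{u_j^\epsilon - u^\epsilon}_{V^{m+1}} + \norm{u^\epsilon - u}_{V^{m+1}},
\end{equation*}
and its analogue for $\omega_j - \omega$, reduce the problem to three subclaims: the two outer mollification errors vanish uniformly in $j$ as $\epsilon \to 0$ thanks to the uniform bound $M$ and standard mollifier estimates; the middle term, now amenable to a top-level energy estimate on account of the extra derivative of $u^\epsilon, u_j^\epsilon$, admits a bound of the form $C_\epsilon \bigl(\norm{u_{0,j}^\epsilon - u_0^\epsilon}_{V^{m+1}} + \norm{\omega_{0,j}^\epsilon - \omega_0^\epsilon}_{V^m}\bigr)$ where $C_\epsilon$ may blow up as $\epsilon \to 0$ but is finite for each fixed $\epsilon$, and therefore tends to zero as $j \to \infty$ for that $\epsilon$. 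Sending $j \to \infty$ first and then $\epsilon \to 0$ delivers the required convergence in $C([0,T']; V^{m+1}(S))$ and $C([0,T']; V^m(S))$.
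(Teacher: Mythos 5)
Your skeleton is essentially the paper's: mollify the initial data at scale $\epsilon$, split $h_j - h$ by the triangle inequality into two mollification errors and a middle term $h_j^\epsilon - h^\epsilon$, control the middle term by a top-level energy estimate with an $\epsilon$-dependent Gronwall constant that is harmless at fixed $\epsilon$, and then take $j \to \infty$ before $\epsilon \to 0^+$. Your identification of the derivative-loss obstruction and your treatment of the middle term both match the paper.

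The gap is in your justification of the two outer terms. You claim that $\norm{u_j - u_j^\epsilon}_{V^{m+1}}$ and $\norm{u - u^\epsilon}_{V^{m+1}}$ vanish, uniformly in $j$, as $\epsilon \to 0$ ``thanks to the uniform bound $M$ and standard mollifier estimates.'' This does not follow: mollifier estimates control $\norm{J_\epsilon h_0 - h_0}_{W^m}$ at time zero, not the difference of the two \emph{solutions} at positive times, and a uniform bound in $V^{m+1} \times V^m$ combined with convergence in $V^2 \times V^1$ yields, by interpolation, convergence only in the intermediate norms $V^s$ with $s < m+1$, never in the top norm itself. Proving $\max_{t \in [0,T']} \norm{h^\epsilon(t) - h(t)}_{W^m} \to 0$ is in fact the heart of the argument (Proposition \ref{prop_c_dep_conv} in the paper): one runs the $W^m$-energy estimate \eqref{eqn_c_dep_est} for the difference of two regularized solutions $h^\epsilon, h^\delta$ with $\delta < \epsilon$, and the dangerous term $\bigl( \norm{h^\epsilon}_{W^{m+1}} + \norm{h^\delta}_{W^m} \bigr) \norm{\tilde{h}}_{W^2} \norm{\tilde{h}}_{W^m}$ is tamed only through the cancellation between $\norm{h^\epsilon(t)}_{W^{m+1}} \leq C \epsilon^{-1} \norm{h_0}_{W^m}$ (Proposition \ref{prop_c_dep_m}(b) plus Proposition \ref{prop_reg_hm}) and $\norm{\tilde{h}(t)}_{W^2} \leq C \norm{h_0^\epsilon - h_0^\delta}_{W^2} = o(\epsilon)$ (Proposition \ref{prop_c_dep_m}(c), which is where $m \geq 3$ enters), making $\{h^\epsilon\}$ Cauchy in $C([0,T']; W^m(S))$, whose limit is identified with $h$ by uniqueness. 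Moreover, the uniformity in $j$ that your first outer term requires does not come from $M$ alone; in the paper it comes from the fact that this convergence is uniform when the initial datum ranges over a compact subset of $W^m(S)$, applied to the compact set $\{h_0, h_{0,j}\}$. Without these two ingredients the iterated limit you propose cannot be closed as stated.
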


The local existence theorem (Theorem \ref{thm_1d_loc_ex}) is a direct consequence of an abstract existence theorem of \citet{kl1984} and various calculus inequalities. To prove Theorem \ref{thm_1d_reg}, we need the following energy estimate (see
Proposition \ref{prop_reg_hm})
\begin{displaymath}
  \max_{t \in [0,T]} \Bigl\{ \norm{u(\cdot,t)}_{V^{m+1}}^{2}+\norm{\omega(\cdot,t)}_{V^{m}}^{2} \Bigr\} \leq M_{m}(T) \Bigl\{ \norm{u_{0}}_{V^{m+1}}^{2}+\norm{\omega_{0}}_{V^{m}}^{2} \Bigr\},
\end{displaymath}
where $M_{m}(T)$ is a constant depending on $\norm{u_{0}}_{V^{\min(m,2)}},\ \norm{\omega_{0}}_{V^{\min(m,2)}}$, and
\begin{displaymath}
  M_{0}(T) := \exp \biggl\{ \int_{0}^{T} \norm{H\omega(\cdot,t)}_{L^{\infty}}\,dt \biggr\}.
\end{displaymath}
Finally, Theorem \ref{thm_1d_c_dep} can be proved using a standard regularization technique. The details of these proofs are given in Section \ref{sec_wellp}.

\section{The Finite-Time Blowup of the 1D Model}\label{sec_blowup}
To study the finite-time blowup of the 1D model \eqref{eqn_eat_1d}, it is convenient to establish the following

\begin{theorem}[Beale-Kato-Majda type criterion]
Suppose that
\begin{displaymath}
  u_{0} \in V^{m+1}(S),\qquad \omega_{0} \in V^{m}(S),
\end{displaymath}
and that the solution of \eqref{eqn_eat_1d} in class $CV^{m}$ exists on $[0,T)$. Then the solution cannot be continued in class $CV^{m}$ up to and beyond $T$ if and only if
\begin{equation}
  \int_{0}^{T} \norm{H\omega(\cdot,t)}_{L^{\infty}}\,dt = \infty.
  \label{eqn_1d_bkm}
\end{equation}
\label{thm_1d_bkm}
\end{theorem}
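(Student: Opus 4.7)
The statement is a standard continuation-type criterion, so I would establish the two directions of the equivalence separately.

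\emph{Easy direction (necessity of \eqref{eqn_1d_bkm}).} Suppose the solution extends in class $CV^{m}$ to some interval $[0,T+\varepsilon]$. Since $m\geq 1$, we have $\omega\in C([0,T+\varepsilon];V^{1}(S))$, so $\norm{\omega(\cdot,t)}_{V^{1}}$ is uniformly bounded for $t\in[0,T]$. The periodic Hilbert transform $H$ in \eqref{eqn_eat_1d_vz} is a Fourier multiplier of symbol $-\mathrm{i}\,\sgn(k)$ on mean-zero functions, hence it is bounded $V^{1}(S)\to V^{1}(S)$; combined with the 1D Sobolev embedding $V^{1}(S)\hookrightarrow L^{\infty}(S)$, this yields $\norm{H\omega(\cdot,t)}_{L^{\infty}}\leq C\norm{\omega(\cdot,t)}_{V^{1}}$ uniformly on $[0,T]$, so the integral in \eqref{eqn_1d_bkm} is finite. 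This step is routine.

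\emph{Main direction (sufficiency of finite integral for continuation).} Assume $\int_{0}^{T}\norm{H\omega(\cdot,t)}_{L^{\infty}}\,dt<\infty$, so that $M_{0}(T)<\infty$. I would then invoke the energy estimate of Proposition \ref{prop_reg_hm} cited in the excerpt: since the constant $M_{m}(T)$ there depends only on the low norms $\norm{u_{0}}_{V^{\min(m,2)}}$, $\norm{\omega_{0}}_{V^{\min(m,2)}}$ and on $M_{0}(T)$, all of which are finite under our hypothesis, we obtain a uniform bound
\begin{displaymath}
  \sup_{t\in[0,T)}\Bigl\{\norm{u(\cdot,t)}_{V^{m+1}}^{2}+\norm{\omega(\cdot,t)}_{V^{m}}^{2}\Bigr\}\leq K<\infty.
\end{displaymath}
This is the heart of the argument: the energy estimate converts the pointwise control on $H\omega$ into global-in-time, high-norm control on the solution.

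\emph{Extending past $T$.} With the uniform bound $K$ in hand, I would apply the local existence theorem (Theorem \ref{thm_1d_loc_ex}) starting from an arbitrary time $t_{0}\in[0,T)$. That theorem produces an existence interval $[t_{0},t_{0}+\tau]$ in class $CV^{m}$ whose length $\tau$ depends only on $\norm{u(\cdot,t_{0})}_{V^{m+1}}$ and $\norm{\omega(\cdot,t_{0})}_{V^{m}}$, and hence, by the uniform bound, only on $K$. Fixing $\tau_{0}=\tau_{0}(K)>0$ and choosing $t_{0}$ with $T-t_{0}<\tau_{0}/2$, the local solution issuing from $(u(\cdot,t_{0}),\omega(\cdot,t_{0}))$ exists on $[t_{0},t_{0}+\tau_{0}]\supsetneq[t_{0},T]$. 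Uniqueness from Theorem \ref{thm_1d_loc_ex} identifies this local solution with the original one on $[t_{0},T)$ and therefore yields a bona fide continuation in class $CV^{m}$ strictly past $T$, contradicting the assumption that the solution blows up at $T$.

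\emph{Main obstacle.} The entire argument rests on the energy estimate in Proposition \ref{prop_reg_hm}; once it is granted, the rest is soft. The delicate point to verify carefully is that the existence time $\tau$ supplied by Theorem \ref{thm_1d_loc_ex} truly depends only on $\norm{u(\cdot,t_{0})}_{V^{m+1}}$ and $\norm{\omega(\cdot,t_{0})}_{V^{m}}$ (and not on $t_{0}$ itself), so that choosing $t_{0}$ arbitrarily close to $T$ is legitimate; this is a standard feature of the Kato--Lai framework cited in the excerpt but should be noted explicitly. Everything else, including the Hilbert-transform bound in the easy direction, is immediate from known mapping properties on periodic Sobolev spaces.
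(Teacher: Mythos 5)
Your proposal is correct and follows essentially the same route as the paper: the easy direction via the bound $\norm{H\omega}_{L^{\infty}}\leq C\norm{\omega}_{V^{1}}$ (Hilbert transform isometry plus Sobolev embedding, stated in the paper in contrapositive form), and the main direction via the energy estimate of Proposition \ref{prop_reg_hm} giving a uniform $W^{m}$ bound on $[0,T)$, followed by the local existence theorem with existence time depending only on that bound and uniqueness to continue the solution past $T$. Your cautionary remark about the existence time being independent of $t_{0}$ is exactly the point the paper also relies on, and it holds because Theorem \ref{thm_1d_loc_ex} gives $T$ depending only on the norms of the data for the autonomous system.
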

This criterion is similar to Theorem 3.2 proved in \citet{osw2008} and is an analogue of the celebrated Beale-Kato-Majda theorem \citep{bkm1984}. Its proof is given in Section \ref{ssec_wp_bkm}.

We shall now argue that the 1D model \eqref{eqn_eat_1d} develops a singularity in finite time, for the particular initial data
\begin{equation}
  u_{0}(z) = a \sin^{2}(\mu z),\quad a > 0,\qquad \omega_{0}(z) = 0.
  \label{eqn_eat_1d_ic}
\end{equation}
More specifically, we shall show that the velocity gradient
\begin{displaymath}
  v_{z}(0) = H\omega(0) = -\frac{1}{L} \int_{0}^{L} \omega(z) \cot(\mu z)\,dz
\end{displaymath}
at $z = 0$ satisfies a lower bound estimate
\begin{equation}
  {-}v_{z}(0,t) = \abs{v_{z}(0,t)} \geq 2 c_{0} \tan(\tfrac{1}{2} c_{0} t),\qquad c_{0} = \biggl[ \frac{\mu}{L} \int_{0}^{L} u_{0}(z) \cot^{2}(\mu z)\,dz \biggr]^{1/2} = (\tfrac{1}{2} a\mu)^{1/2}.
  \label{eqn_vz_0}
\end{equation}
The finite-time blowup of \eqref{eqn_eat_1d} is a consequence of \eqref{eqn_vz_0} in view of Theorem \ref{thm_1d_bkm}. Note that, for the given initial data, the solution has the property that $u$ is even and has a \emph{double zero} at $z = 0,\
\frac{1}{2} L$, and $\omega,\ v$ are odd at $z = 0,\ \frac{1}{2} L$. In addition, $u,\, u_{z},\, \omega > 0$ and $v < 0$ on $(0,\frac{1}{2} L)$ for all $t > 0$ (for the proof of the last assertion, see \eqref{eqn_v}). These symmetry and sign-preserving
properties mimic the behavior of the solutions of the 3D Euler equations \eqref{eqn_eat} on the wall $r = 1$. In particular, they create a compression flow near $z = 0$ with $v < 0$ for $z > 0$ and $v > 0$ for $z < 0$, completely similar to the scenario
observed in 3D \citep{lh2013}. This provides an intuitive explanation for the finite-time blowup of the 1D model.

The proof of \eqref{eqn_vz_0} proceeds as follows. First, we multiply the $\omega$-equation \eqref{eqn_eat_1d_w} by $\cot(\mu z)/L$ and integrate the resulting equation on $[0,L]$; this yields
\begin{subequations}\label{eqn_ie_w}
\begin{equation}
  {-}v_{zt}(0,t) + I = \frac{\mu}{L} \int_{0}^{L} u(z) \csc^{2}(\mu z)\,dz,
  \label{eqn_ie_w_t}
\end{equation}
where
\begin{align}
  I & = \frac{1}{L} \int_{0}^{L} v(z) \omega_{z}(z) \cot(\mu z)\,dz \nonumber \\
  & = -\frac{1}{L} \int_{0}^{L} \omega(z) \bigl[ v_{z}(z) \cot(\mu z) - \mu v(z) \csc^{2}(\mu z) \bigr]\,dz \nonumber \\
  & = H(\omega v_{z})(0) + \frac{\mu}{L} \int_{0}^{L} \omega(z) v(z) \csc^{2}(\mu z)\,dz =: \frac{1}{2}\, (v_{z})^{2}(0) - I_{1}. \label{eqn_ie_w_I}
\end{align}
\end{subequations}
A direct computation using the definition of $v$ shows
\begin{align}
  v(z) & = \frac{1}{\pi} \biggl[ \int_{0}^{L/2} + \int_{L/2}^{L} \biggr] \omega(y) \log \bigl| \sin[\mu(z-y)] \bigr|\,dy \nonumber \\
  & = \frac{1}{\pi} \int_{0}^{L/2} \omega(y) \Bigl\{ \log \bigl| \sin[\mu(z-y)] \bigr| - \log \bigl| \sin[\mu(z+y)] \bigr| \Bigr\}\,dy \nonumber \\
  & = \frac{1}{\pi} \int_{0}^{L/2} \omega(y) \log \biggl| \frac{\tan(\mu z) - \tan(\mu y)}{\tan(\mu z) + \tan(\mu y)} \biggr|\,dy < 0,\qquad \forall z \in (0,\tfrac{1}{2} L). \label{eqn_v}
\end{align}
Substituting \eqref{eqn_v} into the definition of $I_{1}$ (see \eqref{eqn_ie_w_I}), we deduce
\begin{align*}
  I_{1} & = -\frac{\mu}{L} \int_{0}^{L} \omega(z) v(z) \csc^{2}(\mu z)\,dz \geq -\frac{2\mu}{L} \int_{0}^{L/2} \omega(z) v(z) \cot^{2}(\mu z)\,dz \\
  & = -\frac{2\mu}{\pi L} \int_{0}^{L/2} F(z) \int_{0}^{L/2} F(y) K(y,z)\,dy\,dz,
\end{align*}
where $F(z) = \omega(z) \cot(\mu z)$ and
\begin{displaymath}
  K(y,z) = -w \log \biggl| \frac{w+1}{w-1} \biggr|,\qquad w = \frac{\tan(\mu y)}{\tan(\mu z)}.
\end{displaymath}
By introducing the decomposition
\begin{align*}
  I_{11} & = \frac{2\mu}{\pi L} \int_{0}^{L/2} F(z) \int_{0}^{L/2} F(y)\,dy\,dz, \\
  I_{12} & = -\frac{2\mu}{\pi L} \int_{0}^{L/2} F(z) \int_{0}^{L/2} F(y) \bigl[ K(y,z) + 1 \bigr]\,dy\,dz,
\end{align*}
we write $I_{1} \geq I_{11} + I_{12}$ and compute
\begin{displaymath}
  I_{11} = \frac{1}{2} \biggl[ \frac{2}{L} \int_{0}^{L/2} \omega(z) \cot(\mu z)\,dz \biggr]^{2} = \frac{1}{2}\, (v_{z})^{2}(0).
\end{displaymath}
To estimate $I_{12}$, we introduce another decomposition
\begin{displaymath}
  I_{12} = -\frac{2\mu}{\pi L} \int_{0}^{L/2} F(z) \biggl[ \int_{0}^{z} + \int_{z}^{L/2} \biggr] (\cdots)\,dy\,dz =: J_{11} + J_{12},
\end{displaymath}
and rearrange $J_{12}$ using Fubini's theorem:
\begin{align*}
  J_{12} & = -\frac{2\mu}{\pi L} \int_{0}^{L/2} F(y) \int_{0}^{y} F(z) \bigl[ K(y,z) + 1 \bigr]\,dz\,dy \\
  & = -\frac{2\mu}{\pi L} \int_{0}^{L/2} F(z) \int_{0}^{z} F(y) \bigl[ K(z,y) + 1 \bigr]\,dy\,dz.
\end{align*}
This yields
\begin{displaymath}
  I_{12} = -\frac{2\mu}{\pi L} \int_{0}^{L/2} F(z) \int_{0}^{z} F(y) \bigl[ K(y,z) + K(z,y) + 2 \bigr]\,dy\,dz.
\end{displaymath}
Since
\begin{displaymath}
  K(y,z) + K(z,y) + 2 = -w \log \biggl| \frac{w+1}{w-1} \biggr| - \frac{1}{w} \log \biggl| \frac{w+1}{w-1} \biggr| + 2 \leq 0,\qquad \forall w \geq 0,
\end{displaymath}
and $F \geq 0$ on $[0,\frac{1}{2} L]$, we conclude that $I_{12} \geq 0$ and hence
\begin{displaymath}
  I_{1} \geq I_{11} = \frac{1}{2}\, (v_{z})^{2}(0).
\end{displaymath}
This, combined with \eqref{eqn_ie_w}, leads to the estimate:
\begin{equation}
  {-}v_{zt}(0,t) \geq \frac{\mu}{L} \int_{0}^{L} u(z) \cot^{2}(\mu z)\,dz.
  \label{eqn_lb_w}
\end{equation}

Next, we multiply the $u$-equation \eqref{eqn_eat_1d_u} by $\mu \cot^{2}(\mu z)/L$ and integrate the resulting equation on $[0,L]$; this yields
\begin{subequations}\label{eqn_ie_u}
\begin{equation}
  \frac{d}{dt} \biggl[ \frac{\mu}{L} \int_{0}^{L} u(z,t) \cot^{2}(\mu z)\,dz \biggr] - I_{2} = 0,
  \label{eqn_ie_u_t}
\end{equation}
where
\begin{align}
  I_{2} & = -\frac{\mu}{L} \int_{0}^{L} v(z) u_{z}(z) \cot^{2}(\mu z)\,dz \nonumber \\
  & = -\frac{2\mu}{\pi L} \int_{0}^{L/2} G(z) \int_{0}^{L/2} F(y) K(y,z)\,dy\,dz, \label{eqn_ie_u_I}
\end{align}
\end{subequations}
where $G(z) = u_{z}(z) \cot(\mu z)$ and $F(y),\ K(y,z)$ are given as before. By introducing the decomposition
\begin{align*}
  I_{21} & = \frac{2\mu}{\pi L} \int_{0}^{L/2} G(z) \int_{0}^{L/2} F(y)\,dy\,dz, \\
  I_{22} & = -\frac{2\mu}{\pi L} \int_{0}^{L/2} G(z) \int_{0}^{L/2} F(y) \bigl[ K(y,z) + 1 \bigr]\,dy\,dz,
\end{align*}
we write $I_{2} = I_{21} + I_{22}$ and compute
\begin{displaymath}
  I_{21} \geq -\frac{1}{2}\, v_{z}(0) \biggl[ \frac{\mu}{L} \int_{0}^{L} u(z) \cot^{2}(\mu z)\,dz \biggr].
\end{displaymath}
To estimate $I_{22}$, we introduce another decomposition
\begin{displaymath}
  I_{22} = -\frac{2\mu}{\pi L} \int_{0}^{L/2} G(z) \biggl[ \int_{0}^{z} + \int_{z}^{L/2} \biggr] (\cdots)\,dy\,dz =: J_{21} + J_{22},
\end{displaymath}
where
\begin{align*}
  J_{21} & = -\frac{2\mu}{\pi L} \int_{0}^{L/2} G(z) \int_{0}^{z} F(y) \bigl[ K(y,z) + 1 \bigr]\,dy\,dz, \\
  J_{22} & = -\frac{2\mu}{\pi L} \int_{0}^{L/2} F(z) \int_{0}^{z} G(y) \bigl[ K(z,y) + 1 \bigr]\,dy\,dz.
\end{align*}
Since
\begin{align*}
  K(y,z) & = -w \log \biggl| \frac{w+1}{w-1} \biggr| \leq 0,\qquad\quad w \in [0,1), \\
  K(z,y) & = -\frac{1}{w} \log \biggl| \frac{w+1}{w-1} \biggr| \leq -2,\qquad w \in [0,1),
\end{align*}
and $F,\, G \geq 0$ on $[0,\frac{1}{2} L]$, we conclude that
\begin{displaymath}
  I_{22} \geq \frac{2\mu}{\pi L} \int_{0}^{L/2} \cot(\mu z) \int_{0}^{z} \cot(\mu y) D(y,z)\,dy\,dz,
\end{displaymath}
where $D(y,z) = \omega(z) u_{y}(y) - u_{z}(z) \omega(y)$. \emph{Assuming} for now that
\begin{equation}
  D(y,z) \geq 0,\qquad \text{for all $0 \leq y \leq z \leq \tfrac{1}{2} L$}.
  \label{eqn_D_pos}
\end{equation}
Then
\begin{displaymath}
  I_{2} \geq -\frac{1}{2}\, v_{z}(0) \biggl[ \frac{\mu}{L} \int_{0}^{L} u(z) \cot^{2}(\mu z)\,dz \biggr],
\end{displaymath}
and estimates \eqref{eqn_lb_w}--\eqref{eqn_ie_u} reduce to
\begin{subequations}\label{eqn_1d_dineq}
\begin{align}
  h_{1}'(t) & \geq h_{2}(t), \label{eqn_1d_dineq_1} \\
  h_{2}'(t) & \geq \frac{1}{2}\, h_{1}(t) h_{2}(t), \label{eqn_1d_dineq_2}
\end{align}
\end{subequations}
where
\begin{displaymath}
  h_{1}(t) := -v_{z}(0,t) \geq 0,\qquad h_{2}(t) := \frac{\mu}{L} \int_{0}^{L} u(z,t) \cot^{2}(\mu z)\,dz \geq 0.
\end{displaymath}

The lower bound estimate \eqref{eqn_vz_0} can now be easily derived from \eqref{eqn_1d_dineq}. Indeed, integrating \eqref{eqn_1d_dineq_1} from 0 to $t$ and using the initial condition $h_{1}(0) = 0$, we see
\begin{displaymath}
  h_{1}(t) \geq H(t) := \int_{0}^{t} h_{2}(s)\,ds.
\end{displaymath}
Substituting this estimate into \eqref{eqn_1d_dineq_2} and rearranging, we then deduce
\begin{displaymath}
  H''(t) \geq \frac{1}{2}\, H(t) H'(t),
\end{displaymath}
the repeated integration of which yields
\begin{displaymath}
  H(t) \geq 2 c_{0} \tan(\tfrac{1}{2} c_{0} t),\qquad c_{0} = h_{2}^{1/2}(0).
\end{displaymath}
It follows that $H(t)$, hence $h_{1}(t) = \abs{v_{z}(0,t)}$, blows up no later than $T^{*} = \pi/c_{0}$. For the initial data considered in \eqref{eqn_eat_1d_ic}, we have
\begin{displaymath}
  h_{2}(0) = \frac{\mu}{L} \int_{0}^{L} a\sin^{2}(\mu z) \cot^{2}(\mu z)\,dz = \frac{1}{2}\, a\mu,
\end{displaymath}
so the solution blows up no later than $T^{*} = \sqrt{2\pi L/a}$.

To complete the proof of \eqref{eqn_vz_0} and hence the finite-time blowup of the 1D model \eqref{eqn_eat_1d}, it remains to prove \eqref{eqn_D_pos}. Since $u_{z} > 0$ on $(0,\frac{1}{2} L)$ and
\begin{displaymath}
  \frac{D(y,z)}{u_{z}(z) u_{y}(y)} = \frac{\omega(z)}{u_{z}(z)} - \frac{\omega(y)}{u_{y}(y)} =: Q(z) - Q(y),\qquad Q(z) = \frac{\omega(z)}{u_{z}(z)},
\end{displaymath}
we see that $D(y,z) \geq 0$ for $0 \leq y \leq z \leq \frac{1}{2} L$ if $Q_{z} \geq 0$ on $(0,\frac{1}{2} L)$. Since $Q_{z}$ satisfies the evolution equation
\begin{displaymath}
  Q_{zt} + v Q_{zz} = v_{zz} Q,\qquad Q_{z}(z,0) = 0,
\end{displaymath}
we see that $Q_{z} \geq 0$ on $(0,\frac{1}{2} L)$ if (recall $Q > 0$ on $(0,\frac{1}{2} L)$)
\begin{equation}
  v_{zz}(z) \geq 0,\qquad \forall z \in (0,\tfrac{1}{2} L).
  \label{eqn_vzz}
\end{equation}
We have not been able to prove \eqref{eqn_vzz} rigorously but have verified this condition \emph{numerically} for solutions generated from \eqref{eqn_eat_1d_ic}. This strongly indicates the existence of a finite-time singularity for the 1D model
\eqref{eqn_eat_1d}.

It is interesting to note that condition \eqref{eqn_vzz} can be interpreted from a geometric point of view: if the flow field $v$ is odd at $z = 0,\ \frac{1}{2} L$ and is convex on $(0,\frac{1}{2} L)$, it will necessarily be negative for $z > 0$ and
positive for $z < 0$, creating a compression flow near $z = 0$. If the convexity of the velocity field is preserved by the flow, then the compression mechanism near $z = 0$ will be sustained and reinforced, eventually leading to the formation of a
finite-time singularity.

\section{Proof of the Well-Posedness}\label{sec_wellp}
In this section we give the proofs of Theorems \ref{thm_1d_loc_ex}--\ref{thm_1d_c_dep} and \ref{thm_1d_bkm}.

\subsection{An Abstract Existence Theorem}\label{ssec_wp_abs_evol}
The local existence of solutions of \eqref{eqn_eat_1d} can be proved using various techniques such as successive approximation, fixed-point theorem, or Galerkin approximation. In what follows, we shall make use of an abstract existence theorem of
\citet{kl1984}, which is based on a variant of Galerkin approximation. To state the theorem, we consider abstract nonlinear evolution equations of the form
\begin{equation}
  u_{t} + A(t,u) = 0,\qquad t \geq 0,\ u(0) = u_{0},
  \label{eqn_abs_evol}
\end{equation}
where $A$ is a nonlinear operator. To define $A$ precisely, we introduce the notion of \emph{admissible triplet}, which consists of three real separable Banach spaces $\{Y,H,X\}$ with the properties:
\begin{enumerate}
  \item $Y \subset H \subset X$, with the inclusions continuous and dense;
  \item $H$ is a Hilbert space, with inner product $(\cdot,\cdot)_{H}$ and norm $\norm{\cdot}_{H} = (\cdot,\cdot)_{H}^{1/2}$;
  \item there is a continuous, nondegenerate bilinear form on $Y \times X$, denoted by $\langle \cdot,\cdot \rangle$, such that
  \begin{displaymath}
    \langle v,u \rangle = (v,u)_{H},\qquad \forall v \in Y,\ u \in H.
  \end{displaymath}
\end{enumerate}

With these notions, the existence theorem of \citet{kl1984} reads
\begin{theorem}[Abstract existence theorem]
Let $\{Y,H,X\}$ be an admissible triplet. Let $A$ be a sequentially weakly continuous map on $[0,T_{0}] \times H$ into $X$ such that
\begin{equation}
  \langle v,A(t,v) \rangle \geq -\beta(\norm{v}_{H}^{2}),\qquad \forall t \in [0,T_{0}],\ v \in Y,
  \label{eqn_abs_lb}
\end{equation}
where $\beta(r) \geq 0$ is a monotone increasing function of $r \geq 0$. Then for any $u_{0} \in H$, there exists $T \in (0,T_{0}]$ and a solution $u$ of \eqref{eqn_abs_evol} in the class
\begin{displaymath}
  u \in C_{w}([0,T]; H) \cap C_{w}^{1}([0,T]; X),
\end{displaymath}
where the subscript $w$ in $C_{w}$ and $C_{w}^{1}$ indicates weak continuity. Moreover, one has
\begin{displaymath}
  \norm{u(t)}_{H}^{2} \leq \rho(t),\qquad t \in [0,T],
\end{displaymath}
where $\rho$ solves the scalar differential equation
\begin{equation}
  \rho'(t) = 2\beta(\rho),\qquad \rho(0) = \norm{u_{0}}_{H}^{2}.
  \label{eqn_abs_maj}
\end{equation}
If the solution to \eqref{eqn_abs_maj} is not unique, $\rho$ is understood as the maximal solution.
\label{thm_abs_evol}
\end{theorem}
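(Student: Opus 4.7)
My plan is to establish this abstract existence theorem by a Faedo--Galerkin construction adapted to the weak-continuity framework. First, I would exploit the separability of $Y$ to choose an increasing sequence of finite-dimensional subspaces $Y_n \subset Y$ whose union is dense in $Y$. Using the nondegenerate bilinear form $\langle \cdot, \cdot \rangle$, I would define, for each $n$, a projection-like operator $P_n \colon X \to Y_n$ characterized by $(P_n w, v)_H = \langle v, w \rangle$ for every $v \in Y_n$; since $Y_n$ is finite-dimensional, this is well-posed via Riesz representation, and on $H$ it coincides with the $(\cdot,\cdot)_H$-orthogonal projection onto $Y_n$. I would then seek an approximate solution $u_n \in C^1([0, T_n]; Y_n)$ of the projected ODE
\begin{equation*}
  \dot{u}_n + P_n A(t, u_n) = 0, \qquad u_n(0) = P_n u_0.
\end{equation*}
Local-in-time existence within each finite-dimensional $Y_n$ follows from Peano's theorem, once one observes that the sequential weak continuity of $A$ combined with the finite dimensionality of $Y_n$ forces $(t, v) \mapsto P_n A(t, v)$ to be continuous from $[0, T_0] \times Y_n$ into $Y_n$.

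The second step is to derive the fundamental a priori estimate. Using the defining identity for $P_n$ together with the coercivity bound \eqref{eqn_abs_lb}, I would compute
\begin{equation*}
  \tfrac{1}{2} \frac{d}{dt} \norm{u_n(t)}_H^2 = (u_n, \dot{u}_n)_H = -(u_n, P_n A(t, u_n))_H = -\langle u_n, A(t, u_n) \rangle \leq \beta(\norm{u_n(t)}_H^2).
\end{equation*}
Comparing with the scalar majorant \eqref{eqn_abs_maj} and invoking the monotonicity of $\beta$ yields $\norm{u_n(t)}_H^2 \leq \rho(t)$ on the entire interval $[0, T]$ on which the maximal solution $\rho$ stays finite, which also certifies continuation of each approximate solution up to $T$. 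Inserting this bound back into the equation, one obtains a uniform bound for $\dot{u}_n$ in $X$, so the family $\{u_n\}$ is equicontinuous when regarded as a collection of maps into $X$.

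The third step is the extraction of a weak limit. At each point of a countable dense subset of $[0, T]$ I would apply the Banach--Alaoglu theorem in $H$, and a standard diagonal argument combined with the $X$-equicontinuity and the continuous dense embedding $H \hookrightarrow X$ would produce a subsequence (not relabeled) and a limit function $u$ with $u_n(t) \rightharpoonup u(t)$ in $H$ for every $t \in [0, T]$. The sequential weak continuity hypothesis then delivers $A(t, u_n(t)) \rightharpoonup A(t, u(t))$ in $X$, which is precisely what is needed to pass to the limit in the weak formulation of the projected ODE (testing against elements of $\bigcup_n Y_n$, whose density in $Y$ closes the argument) and produces a solution $u \in C_w([0, T]; H) \cap C_w^{1}([0, T]; X)$ inheriting the energy bound from the approximations.

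The main obstacle, as I see it, lies in the three-space interplay $Y \subset H \subset X$: the evolution naturally sits in $X$, the energy identity is natural in $H$, and yet the coercivity is only accessible by pairing against test functions in $Y$. Ensuring that $P_n u_0 \to u_0$ in $H$, that $P_n A(t, \cdot)$ inherits the correct continuity at the finite-dimensional level, and that weak $H$-limits are compatible with the weak-$*$ topology of $X$ all require one to coordinate the three topologies carefully. Once this projection machinery is in place, the energy estimate together with the weak continuity of $A$ does the heavy lifting in the limit passage.
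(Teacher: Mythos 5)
The paper does not actually prove Theorem \ref{thm_abs_evol}; it quotes it from \citet{kl1984}, whose proof is indeed the Galerkin scheme you outline, so your overall architecture (projections $P_n$ defined through the pairing, the energy identity $(u_n,P_nA(t,u_n))_H=\langle u_n,A(t,u_n)\rangle\geq-\beta(\norm{u_n}_H^2)$, comparison with the majorant $\rho$) is the right one. There is, however, a genuine gap at the compactness step. You claim that the uniform $X$-bound on $A(t,u_n(t))$ gives a uniform $X$-bound on $\dot u_n=-P_nA(t,u_n)$, hence norm-equicontinuity of $\{u_n\}$ as $X$-valued maps. This does not follow: $P_n$ has norm one on $H$, but acting on a general $w\in X$ it is only controlled through $\norm{P_nw}_H=\sup\{\langle v,w\rangle:\ v\in Y_n,\ \norm{v}_H\le 1\}\le C\,\norm{w}_X\,\sup\{\norm{v}_Y:\ v\in Y_n,\ \norm{v}_H\le 1\}$, and the constant comparing the $Y$- and $H$-norms on $Y_n$ blows up with $n$ for a generic choice of the subspaces $Y_n\subset Y$. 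In the abstract setting there is no Fourier or spectral structure forcing the $P_n$ to be uniformly bounded on $X$, so the asserted equicontinuity in $X$ --- and with it your Banach--Alaoglu-plus-diagonal extraction and the definition of $u(t)$ at the remaining times --- is unjustified as stated.

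The standard repair, and the one used by \citet{kl1984}, is to replace $X$-valued equicontinuity by scalar equicontinuity against fixed test elements: for $v\in Y_m$ and $n\ge m$ one has $\tfrac{d}{dt}(v,u_n(t))_H=-\langle v,A(t,u_n(t))\rangle$, which is bounded by $C\norm{v}_Y\sup_{n,t}\norm{A(t,u_n(t))}_X$ uniformly in $n$; Arzel\`a--Ascoli plus a diagonal argument over a countable subset of $\bigcup_mY_m$ dense in $Y$ (hence in $H$), combined with the uniform bound $\norm{u_n(t)}_H^2\le\rho(t)$, yields a subsequence with $u_n(t)\rightharpoonup u(t)$ in $H$ for every $t$ and $u\in C_w([0,T];H)$. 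After this correction the rest of your limit passage goes through essentially as you describe: sequential weak continuity of $A$ and dominated convergence in the integrated weak formulation tested on $\bigcup_mY_m$, density of this union in $Y$, and the nondegeneracy of the pairing identify $u_t=-A(t,u)$ in $X$ and give $u\in C^1_w([0,T];X)$, while the energy bound survives by weak lower semicontinuity of $\norm{\cdot}_H$. You should also record explicitly that $A$ maps bounded subsets of $[0,T_0]\times H$ into bounded subsets of $X$ (true, since bounded sequences in $H$ have weakly convergent subsequences and weakly convergent sequences in $X$ are bounded), as this boundedness is what feeds both the continuation of the $u_n$ up to the common time $T$ and the uniform Lipschitz bound above.
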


Theorem \ref{thm_abs_evol} is not concerned with the uniqueness of the solution, neither is it concerned with the existence of strongly continuous solutions. However, both issues can be settled in a straightforward manner in our case.

With the aid of Theorem \ref{thm_abs_evol}, we shall prove the local existence part of Theorem \ref{thm_1d_loc_ex} by introducing, for any $k \geq 0$, the tensor product space
\begin{displaymath}
  W^{k}(S) = V^{k+1}(S) \times V^{k}(S).
\end{displaymath}
We equip $W^{k}(S)$ with the (natural) inner product
\begin{displaymath}
  (f,g)_{W^{k}} = (f_{1},g_{1})_{V^{k+1}} + (f_{2},g_{2})_{V^{k}},\qquad f,\, g \in W^{k}(S),
\end{displaymath}
and the norm
\begin{displaymath}
  \norm{f}_{W^{k}} = (f,f)_{W^{k}}^{1/2},\qquad f \in W^{k}(S).
\end{displaymath}
In addition, we define for any $m \geq 1$ the triplet
\begin{displaymath}
  Y = W^{2m}(S),\qquad H = W^{m}(S),\qquad X = W^{0}(S),
\end{displaymath}
and the continuous bilinear form $\langle \cdot,\cdot \rangle\colon Y \times X \to \mathbb{R}$:
\begin{displaymath}
  \langle f,g \rangle = (-1)^{m} \int_{0}^{L} \partial_{z}^{2m+1} f_{1} \cdot \partial_{z} g_{1}\,dz + (-1)^{m} \int_{0}^{L} \partial_{z}^{2m} f_{2} \cdot g_{2}\,dz,\qquad f \in Y,\ g \in X.
\end{displaymath}
Integration by parts shows that
\begin{displaymath}
  \langle f,g \rangle = (f_{1},g_{1})_{V^{m+1}} + (f_{2},g_{2})_{V^{m}} = (f,g)_{H},\qquad f \in Y,\ g \in H,
\end{displaymath}
so $\{Y,H,X\}$ forms an admissible triplet. Finally, we introduce the nonlinear mapping
\begin{equation}
  A(h) = (v u_{z},v \omega_{z} - u_{z}),\qquad h = (u,\omega) \in W^{m}(S),\ v \in V^{m+1}(S) \text{ with } v_{z} = H\omega.
  \label{eqn_1d_a}
\end{equation}

To apply Theorem \ref{thm_abs_evol}, we need to show that $A$ defines a map from $H$ into $X$, that $A$ is sequentially weakly continuous, and that $\langle h,A(h) \rangle$ satisfies the estimate \eqref{eqn_abs_lb}. The proof of these facts relies on two
basic estimates of the operator $A$, which we shall derive in the next section.

\subsection{Basic Estimates}\label{ssec_wp_est}
The basic tool that we shall need is the following
\begin{prop}[Calculus inequalities]
Let $k \geq 0$ be any nonnegative integer and let $f,\, g \in L^{\infty}(S) \cap V^{k}(S)$. Then
\begin{subequations}\label{eqn_cal_ineq_0}
\begin{gather}
  \norm{fg}_{V^{k}} \leq C \Bigl\{ \norm{f}_{L^{\infty}} \norm{g}_{V^{k}} + (1-\delta_{k,0}) \norm{f}_{V^{k}} \norm{g}_{L^{\infty}} \Bigr\}, \label{eqn_cal_ineq_0_fg} \\
  \intertext{where $C$ is an absolute constant depending only on $k$ and $L$ and $\delta_{k,0}$ is the usual Kronecker delta symbol, with value 1 at $k = 0$ and 0 otherwise. If $k \geq 1$, then there also holds}
  \norm{\partial_{z}^{k} (fg) - f \partial_{z}^{k} g}_{V^{0}} \leq C \Bigl\{ \norm{f_{z}}_{L^{\infty}} \norm{g}_{V^{k-1}} + (1-\delta_{k,1}) \norm{f}_{V^{k}} \norm{g}_{L^{\infty}} \Bigr\}. \label{eqn_cal_ineq_0_fgd}
\end{gather}
\end{subequations}
\label{prop_cal_ineq_0}
\end{prop}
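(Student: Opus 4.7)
The two bounds are Moser/Kato-Ponce type inequalities, to be proved by combining the Leibniz rule with periodic Gagliardo-Nirenberg interpolation. Since every element of $V^k(S)$ has zero mean, Poincar\'e's inequality makes $\|f\|_{V^k} = \|\partial_z^k f\|_{L^2}$ equivalent to the full $H^k(S)$ norm, so I may work interchangeably with either. The case $k=0$ of \eqref{eqn_cal_ineq_0_fg} is the trivial Hölder estimate $\|fg\|_{L^2} \leq \|f\|_{L^\infty}\|g\|_{L^2}$ (which explains the Kronecker delta), while \eqref{eqn_cal_ineq_0_fgd} is vacuous. So I assume $k \geq 1$ and expand by Leibniz:
\[
\partial_z^k(fg) \;=\; f\,\partial_z^k g \;+\; \sum_{j=1}^{k} \binom{k}{j}\,\partial_z^j f\cdot\partial_z^{k-j} g.
\]
The extremal term $f\,\partial_z^k g$ is bounded in $L^2$ by $\|f\|_{L^\infty}\|g\|_{V^k}$, the $j=k$ term by $\|f\|_{V^k}\|g\|_{L^\infty}$, and the commutator on the left of \eqref{eqn_cal_ineq_0_fgd} equals exactly the sum on the right.

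For the intermediate indices $1 \leq j \leq k-1$ I would invoke the periodic Gagliardo-Nirenberg inequality
\[
\|\partial_z^j u\|_{L^{2k/j}(S)} \;\leq\; C\,\|u\|_{L^\infty}^{\,1-j/k}\,\|\partial_z^k u\|_{L^2}^{\,j/k}, \qquad 1 \leq j \leq k-1,
\]
applied to both $f$ (at index $j$) and $g$ (at index $k-j$). Combining these with the Hölder pairing $\|(\partial_z^j f)(\partial_z^{k-j} g)\|_{L^2} \leq \|\partial_z^j f\|_{L^{2k/j}}\|\partial_z^{k-j} g\|_{L^{2k/(k-j)}}$ produces a product of four factors with complementary exponents summing to $1$. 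A single application of the weighted AM-GM inequality $a^{\theta} b^{1-\theta} \leq \theta a + (1-\theta) b$ then converts the product into the symmetric sum $\|f\|_{L^\infty}\|g\|_{V^k} + \|f\|_{V^k}\|g\|_{L^\infty}$, finishing \eqref{eqn_cal_ineq_0_fg}.

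For \eqref{eqn_cal_ineq_0_fgd} I would run the same argument but interpolate $f_z$ of order $k-1$ rather than $f$ of order $k$: for $2 \leq j \leq k-1$,
\[
\|\partial_z^j f\|_{L^{2(k-1)/(j-1)}(S)} \;\leq\; C\,\|f_z\|_{L^\infty}^{\,1-(j-1)/(k-1)}\,\|\partial_z^k f\|_{L^2}^{\,(j-1)/(k-1)},
\]
together with the symmetric Gagliardo-Nirenberg bound for $\partial_z^{k-j} g$ (order $k-1$) in terms of $\|g\|_{L^\infty}$ and $\|g\|_{V^{k-1}}$. Hölder in the conjugate pair $\bigl(\tfrac{2(k-1)}{j-1},\tfrac{2(k-1)}{k-j}\bigr)$ and AM-GM again yield the desired bound; the boundary terms $j=1$ and $j=k$ (the latter absent when $k=1$, which is why the second Kronecker delta appears) produce directly the summands $\|f_z\|_{L^\infty}\|g\|_{V^{k-1}}$ and $\|f\|_{V^k}\|g\|_{L^\infty}$.

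The one genuine obstacle is establishing the periodic Gagliardo-Nirenberg inequalities on $S$ for zero-mean functions, since the usual statements are for $\mathbb{R}^n$. These can be obtained from the whole-line version by a standard extension/cutoff argument, or more cleanly via Littlewood-Paley decomposition on the torus, with the Poincar\'e inequality supplying the missing low-frequency control. Once they are in hand, the rest of the proof is a straightforward bookkeeping of binomial coefficients, Hölder exponents, and AM-GM conversions.
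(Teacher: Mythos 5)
Your proof is correct: the paper itself gives no proof of Proposition \ref{prop_cal_ineq_0}, simply citing Majda--Bertozzi \citep{mb2002}, and your Leibniz-rule-plus-Gagliardo--Nirenberg-plus-H\"older argument (with the exponent bookkeeping you describe, which checks out, including the $k=1$ and $j=k$ edge cases governing the Kronecker deltas) is precisely the standard Moser-type proof given in that reference. The only obstacle you flag, the validity of the homogeneous Gagliardo--Nirenberg interpolation inequalities on the circle, is minor: the usual integration-by-parts/induction proof of $\norm{\partial_{z}^{j} u}_{L^{2k/j}} \leq C \norm{u}_{L^{\infty}}^{1-j/k} \norm{\partial_{z}^{k} u}_{L^{2}}^{j/k}$ carries over verbatim to the periodic setting since there are no boundary terms, so no extension or Littlewood--Paley machinery is actually needed.
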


Inequalities \eqref{eqn_cal_ineq_0} are well known and hold true more generally for functions $f,\, g \in L^{\infty} \cap H^{k}$. For a proof of these results, see for example \citet{mb2002}. It is also worth noting that \eqref{eqn_cal_ineq_0_fgd} holds
true for $g \in L^{\infty} \cap H^{k-1}$ by the usual density argument, even if the individual terms on the left-hand side may not belong to $H^{0}$.

Besides Proposition \ref{prop_cal_ineq_0}, the following well-known facts will also be used in the sequel.
\begin{subequations}\label{eqn_cal_fact}
\begin{enumerate}
  \item The Hilbert transform $H$ is an isometry on $V^{k}(S)$ for all $k \geq 0$, with $\norm{Hf}_{V^{k}} = \norm{f}_{V^{k}}$. In addition, $H$ commutes with $\partial_{z}$, i.e. $H(f_{z}) = (Hf)_{z}$.
  \item The Poincar\'{e} inequality asserts that $V^{k}(S) \subset V^{j}(S)$ for all $k > j \geq 0$, with
  \begin{equation}
    \norm{f}_{V^{j}} \leq c_{0}^{k-j} \norm{f}_{V^{k}},\qquad f \in V^{k}(S),\ k > j \geq 0.
    \label{eqn_cal_fact_p}
  \end{equation}
  The constant $c_{0}$ in the above inequality can be computed explicitly, with $c_{0} = L/(2\pi)$.
  \item The Sobolev imbedding theorem asserts that $V^{k}(S) \subset L^{\infty}(S)$ for all $k \geq 1$, with
  \begin{equation}
    \norm{f}_{L^{\infty}} \leq \tilde{c}_{0} \norm{f_{z}}_{V^{0}} \leq \tilde{c}_{0} c_{0}^{k-1} \norm{f}_{V^{k}},\qquad f \in V^{k}(S),\ k \geq 1.
    \label{eqn_cal_fact_s}
  \end{equation}
  The constant $\tilde{c}_{0}$ in the above inequality can be computed explicitly, with $\tilde{c}_{0} = L/(2\sqrt{3})$.
  \item As a result of the Sobolev imbedding theorem and Proposition \ref{prop_cal_ineq_0}, $V^{k}(S)$ is a Banach algebra for all $k \geq 1$, with
  \begin{equation}
    \norm{fg}_{V^{k}} \leq C \norm{f}_{V^{k}} \norm{g}_{V^{k}},\qquad f,\, g \in V^{k}(S),\ k \geq 1,
    \label{eqn_cal_fact_b}
  \end{equation}
  where $C$ is an absolute constant depending only on $k$ and $L$.
\end{enumerate}
\end{subequations}

With the aid of these tools, we shall prove two basic estimates for the nonlinear operator $A$ defined in \eqref{eqn_1d_a}. The first estimate concerns the boundedness and (strong) continuity of $A$.
\begin{lemma}
Let $k \geq 1$ be any positive integer. Let $u \in V^{k+1}(S),\ \omega,\, \tilde{\omega} \in V^{k}(S)$ and $\tilde{v} \in V^{k+1}(S)$ be such that $\tilde{v}_{z} = H\tilde{\omega}$. Then $\tilde{v} u_{z} \in V^{k}(S),\ \tilde{v} \omega_{z} \in
V^{k-1}(S)$, and
\begin{subequations}\label{eqn_cal_ineq_1}
\begin{align}
  \norm{\tilde{v} u_{z}}_{V^{k}} & \leq C \Bigl\{ \norm{\tilde{\omega}}_{V^{0}} \norm{u}_{V^{k+1}} + \norm{u}_{V^{2}} \norm{\tilde{\omega}}_{V^{k-1}} \Bigr\} \leq C \norm{\tilde{\omega}}_{V^{k-1}} \norm{u}_{V^{k+1}}, \label{eqn_cal_ineq_1_u} \\
  \norm{\tilde{v} \omega_{z}}_{V^{k-1}} & \leq C \Bigl\{ \norm{\tilde{\omega}}_{V^{0}} \norm{\omega}_{V^{k}} + (1-\delta_{k,1}) \norm{\omega}_{V^{2}} \norm{\tilde{\omega}}_{V^{k-2}} \Bigr\} \leq C \norm{\tilde{\omega}}_{V^{k-1}} \norm{\omega}_{V^{k}}, \label{eqn_cal_ineq_1_w}
\end{align}
\end{subequations}
where $C$ is an absolute constant depending only on $k$ and $L$. \label{lmm_cal_ineq_1}
\end{lemma}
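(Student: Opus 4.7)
The lemma is a direct consequence of the calculus inequality \eqref{eqn_cal_ineq_0_fg}, the identity $\tilde v_z = H\tilde\omega$, and the ambient tools collected in \eqref{eqn_cal_fact}. In each product the two factors are $\tilde v$ (or a derivative thereof) and a derivative of $u$ or $\omega$; the role of the Hilbert transform is simply to convert every norm $\norm{\tilde v}_{V^j}$ into $\norm{\tilde\omega}_{V^{j-1}}$, via the isometry of $H$ together with $H\partial_z = \partial_z H$.

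For \eqref{eqn_cal_ineq_1_u} I apply \eqref{eqn_cal_ineq_0_fg} at level $k\ge 1$ to $\tilde v\cdot u_z$, obtaining
\begin{displaymath}
  \norm{\tilde v\, u_z}_{V^k} \le C\bigl\{\norm{\tilde v}_{L^\infty}\norm{u}_{V^{k+1}} + \norm{\tilde v}_{V^k}\norm{u_z}_{L^\infty}\bigr\}.
\end{displaymath}
The isometry of $H$ gives $\norm{\tilde v}_{V^k}=\norm{\tilde v_z}_{V^{k-1}}=\norm{\tilde\omega}_{V^{k-1}}$, while \eqref{eqn_cal_fact_s} applied to $\tilde v\in V^1(S)$ and to $u_z\in V^1(S)$ (both available because $k\ge 1$) yields $\norm{\tilde v}_{L^\infty}\le \tilde c_0\norm{\tilde\omega}_{V^0}$ and $\norm{u_z}_{L^\infty}\le \tilde c_0\norm{u}_{V^2}$. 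This proves the first inequality in \eqref{eqn_cal_ineq_1_u}; the second follows by bounding $\norm{\tilde\omega}_{V^0}$ and $\norm{u}_{V^2}$ via Poincar\'e \eqref{eqn_cal_fact_p} in terms of $\norm{\tilde\omega}_{V^{k-1}}$ and $\norm{u}_{V^{k+1}}$ respectively.

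The argument for \eqref{eqn_cal_ineq_1_w} is parallel, with \eqref{eqn_cal_ineq_0_fg} now applied at level $k-1$ to the product $\tilde v\cdot\omega_z$. For $k\ge 2$ the level $k-1\ge 1$, the Kronecker factor vanishes, and the same identifications produce
\begin{displaymath}
  \norm{\tilde v\,\omega_z}_{V^{k-1}}\le C\bigl\{\norm{\tilde\omega}_{V^0}\norm{\omega}_{V^k}+\norm{\tilde\omega}_{V^{k-2}}\norm{\omega}_{V^2}\bigr\},
\end{displaymath}
where $\norm{\omega_z}_{L^\infty}\le\tilde c_0\norm{\omega}_{V^2}$ is legitimate since $\omega\in V^k\subset V^2$. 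For $k=1$ the calculus inequality collapses to the single term $C\norm{\tilde v}_{L^\infty}\norm{\omega_z}_{V^0}\le C\norm{\tilde\omega}_{V^0}\norm{\omega}_{V^1}$, which matches the statement thanks to the factor $(1-\delta_{k,1})$. A final application of Poincar\'e collapses the two-term bound to $C\norm{\tilde\omega}_{V^{k-1}}\norm{\omega}_{V^k}$.

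The only subtlety worth flagging is the edge case $k=1$ of \eqref{eqn_cal_ineq_1_w}: at that regularity $\omega$ lies only in $V^1$, so $\omega_z$ need not be bounded, and the second term of \eqref{eqn_cal_ineq_0_fg} cannot be estimated as for $k\ge 2$. This is precisely the reason for the factor $(1-\delta_{k,1})$ in the statement and the reason the proof splits into two cases; no ingredient is delicate, the work being essentially bookkeeping of derivative counts.
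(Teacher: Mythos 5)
Your argument is correct and coincides with the paper's own proof: both apply the calculus inequality \eqref{eqn_cal_ineq_0_fg} to $\tilde v\,u_z$ at level $k$ and to $\tilde v\,\omega_z$ at level $k-1$, convert norms of $\tilde v$ into norms of $\tilde\omega$ via the isometry of the Hilbert transform, and finish with Sobolev imbedding and Poincar\'e, with the same treatment of the $k=1$ case responsible for the $(1-\delta_{k,1})$ factor. No substantive differences to report.
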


\begin{proof}
\eqref{eqn_cal_ineq_1_u} is a direct consequence of the calculus inequality \eqref{eqn_cal_ineq_0_fg}:
\begin{displaymath}
  \norm{\tilde{v} u_{z}}_{V^{k}} \leq C \Bigl\{ \norm{\tilde{v}}_{L^{\infty}} \norm{u_{z}}_{V^{k}} + \norm{u_{z}}_{L^{\infty}} \norm{\tilde{v}}_{V^{k}} \Bigr\},
\end{displaymath}
and the isometry property of the Hilbert transform:
\begin{displaymath}
  \norm{\tilde{v}}_{V^{k}} = \norm{\tilde{v}_{z}}_{V^{k-1}} = \norm{H\tilde{\omega}}_{V^{k-1}} = \norm{\tilde{\omega}}_{V^{k-1}}.
\end{displaymath}
Combining these estimates, using Sobolev's imbedding theorem and noting that $k \geq 1$, we obtain
\begin{align*}
  \norm{\tilde{v} u_{z}}_{V^{k}} & \leq C \Bigl\{ \norm{\tilde{v}_{z}}_{V^{0}} \norm{u}_{V^{k+1}} + \norm{u_{zz}}_{V^{0}} \norm{\tilde{\omega}}_{V^{k-1}} \Bigr\} \\
  & \leq C \Bigl\{ \norm{\tilde{\omega}}_{V^{0}} \norm{u}_{V^{k+1}} + \norm{u}_{V^{2}} \norm{\tilde{\omega}}_{V^{k-1}} \Bigr\} \leq C \norm{\tilde{\omega}}_{V^{k-1}} \norm{u}_{V^{k+1}},
\end{align*}
which is \eqref{eqn_cal_ineq_1_u}. To prove \eqref{eqn_cal_ineq_1_w}, we follow the same steps, utilizing the calculus inequality \eqref{eqn_cal_ineq_0_fg}:
\begin{displaymath}
  \norm{\tilde{v} \omega_{z}}_{V^{k-1}} \leq C \Bigl\{ \norm{\tilde{v}}_{L^{\infty}} \norm{\omega_{z}}_{V^{k-1}} + (1-\delta_{k,1}) \norm{\omega_{z}}_{L^{\infty}} \norm{\tilde{v}}_{V^{k-1}} \Bigr\},
\end{displaymath}
and the isometry property of the Hilbert transform (for $k \geq 2$):
\begin{displaymath}
  \norm{\tilde{v}}_{V^{k-1}} = \norm{\tilde{v}_{z}}_{V^{k-2}} = \norm{H\tilde{\omega}}_{V^{k-2}} = \norm{\tilde{\omega}}_{V^{k-2}}.
\end{displaymath}
Combining these estimates and using Sobolev's imbedding theorem then yields
\begin{align*}
  \norm{\tilde{v} \omega_{z}}_{V^{k-1}} & \leq C \Bigl\{ \norm{\tilde{v}_{z}}_{V^{0}} \norm{\omega}_{V^{k}} + (1-\delta_{k,1}) \norm{\omega_{zz}}_{V^{0}} \norm{\tilde{\omega}}_{V^{k-2}} \Bigr\} \\
  & \leq C \Bigl\{ \norm{\tilde{\omega}}_{V^{0}} \norm{\omega}_{V^{k}} + (1-\delta_{k,1}) \norm{\omega}_{V^{2}} \norm{\tilde{\omega}}_{V^{k-2}} \Bigr\} \leq C \norm{\tilde{\omega}}_{V^{k-1}} \norm{\omega}_{V^{k}},
\end{align*}
which is \eqref{eqn_cal_ineq_1_w}.
\end{proof}

As an immediate consequence of Lemma \ref{lmm_cal_ineq_1}, we have the following
\begin{prop}
Let $m \geq 1$ be any positive integer. The nonlinear operator $A$ defined by
\begin{displaymath}
  A(h) = (v u_{z},v \omega_{z} - u_{z}),\qquad h = (u,\omega) \in W^{m}(S),\ v \in V^{m+1}(S) \text{ with } v_{z} = H\omega,
\end{displaymath}
maps $W^{m}(S)$ continuously (in the strong topology) into $W^{k}(S)$ for all $0 \leq k \leq m-1$. In particular, we have
\begin{align*}
  \norm{A(h)}_{W^{k}} & \leq C \Bigl\{ \norm{h}_{W^{m}} + 1 \Bigr\} \norm{h}_{W^{m}},& & \forall h \in W^{m}(S), \\
  \norm{A(h_{1}) - A(h_{2})}_{W^{k}} & \leq C \Bigl\{ \norm{h_{1}}_{W^{m}} + \norm{h_{2}}_{W^{m}} + 1 \Bigr\} \norm{h_{1} - h_{2}}_{W^{m}},& & \forall h_{1},\, h_{2} \in W^{m}(S),
\end{align*}
where $C$ is an absolute constant depending only on $m$ and $L$. \label{prop_cal_ineq_1}
\end{prop}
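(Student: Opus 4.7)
The plan is to reduce the claim directly to Lemma \ref{lmm_cal_ineq_1}, since, apart from the harmless linear term $-u_z$, the operator $A$ is bilinear in $h$. By the definition $W^k = V^{k+1} \times V^k$, the norm splits as
\[
\norm{A(h)}_{W^k}^{2} = \norm{v u_z}_{V^{k+1}}^{2} + \norm{v \omega_z - u_z}_{V^k}^{2},
\]
and each summand can be controlled separately. For the first summand I would apply Lemma \ref{lmm_cal_ineq_1} with its integer ``$k$'' taken to be $k+1$ (admissible since $k+1 \geq 1$) and $\tilde v = v,\ \tilde \omega = \omega$, getting $\norm{v u_z}_{V^{k+1}} \leq C \norm{\omega}_{V^{k}}\norm{u}_{V^{k+2}}$. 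For the second summand I would split $v\omega_z - u_z$; the same lemma (with the same index shift) gives $\norm{v\omega_z}_{V^{k}} \leq C\norm{\omega}_{V^{k}}\norm{\omega}_{V^{k+1}}$, while $\norm{u_z}_{V^{k}} = \norm{u}_{V^{k+1}}$ is automatic.

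Because $k \leq m-1$, Poincar\'e's inequality \eqref{eqn_cal_fact_p} lets me dominate all the right-hand norms $\norm{\omega}_{V^{k}},\ \norm{\omega}_{V^{k+1}},\ \norm{u}_{V^{k+1}},\ \norm{u}_{V^{k+2}}$ uniformly by $\norm{\omega}_{V^{m}}$ and $\norm{u}_{V^{m+1}}$. Collecting the quadratic contributions $\norm{u}_{V^{m+1}}\norm{\omega}_{V^{m}}$ and $\norm{\omega}_{V^{m}}^{2}$ with the linear contribution $\norm{u}_{V^{m+1}}$ (from the $-u_z$ term) yields $\norm{A(h)}_{W^k} \leq C(\norm{h}_{W^m}+1)\norm{h}_{W^m}$, which is the first claimed bound.

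For the Lipschitz estimate I would exploit bilinearity. With $h_i = (u_i,\omega_i)$ and $v_{i,z} = H\omega_i$, the identity
\[
v_{1} u_{1,z} - v_{2} u_{2,z} = v_{1}(u_{1,z}-u_{2,z}) + (v_{1}-v_{2})u_{2,z},
\]
together with its analogue for $v\omega_z$, expresses $A(h_1) - A(h_2)$ as a sum of four bilinear products in each of which one factor is a difference; since $(v_1-v_2)_z = H(\omega_1-\omega_2)$ by linearity of $H$, every product is again covered by Lemma \ref{lmm_cal_ineq_1} applied to the pair $(h_1-h_2,\, h_j)$. The residual linear piece $-(u_{1,z}-u_{2,z})$ contributes $\norm{u_1-u_2}_{V^{k+1}}$, and the same Poincar\'e step collects everything into $(\norm{h_1}_{W^m}+\norm{h_2}_{W^m}+1)\norm{h_1-h_2}_{W^m}$.

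The only bookkeeping subtlety is the endpoint $k=0$: the index shift then invokes the lemma with its ``$k$'' equal to $1$, and the factor $(1-\delta_{k,1})$ in \eqref{eqn_cal_ineq_1_w} correctly switches off the second summand (which would otherwise involve a nonexistent $V^{-1}$ norm). Beyond this indexing detail, no genuine analytic difficulty arises; the proposition is in essence a direct corollary of Lemma \ref{lmm_cal_ineq_1}.
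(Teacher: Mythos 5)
Your argument is correct and is essentially the paper's own proof: both reduce the claim to Lemma \ref{lmm_cal_ineq_1} together with Poincar\'e's inequality, and both handle the difference $A(h_1)-A(h_2)$ by the standard telescoping of the bilinear terms with $\tilde v_z = H(\omega_1-\omega_2)$. The only cosmetic differences are that the paper applies Poincar\'e first (estimating the $V^{k+1}$ and $V^{k}$ norms by the $V^{m}$ and $V^{m-1}$ norms and then invoking the lemma at index $m$) whereas you invoke the lemma at index $k+1$ and apply Poincar\'e afterwards, and your telescoping pairs the terms symmetrically to the paper's; neither affects the result.
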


\begin{proof}
Using Poincar\'{e}'s inequality and Lemma \ref{lmm_cal_ineq_1}, we have, for each $0 \leq k \leq m-1$,
\begin{align*}
  \norm{v u_{z}}_{V^{k+1}} & \leq C \norm{v u_{z}}_{V^{m}} \leq C \norm{\omega}_{V^{m}} \norm{u}_{V^{m+1}}, \\
  \norm{v \omega_{z} - u_{z}}_{V^{k}} & \leq C \norm{v \omega_{z} - u_{z}}_{V^{m-1}} \leq C \Bigl\{ \norm{\omega}_{V^{m}}^{2} + \norm{u}_{V^{m+1}} \Bigr\}.
\end{align*}
Hence
\begin{displaymath}
  \norm{A(h)}_{W^{k}} \leq C \Bigl\{ \norm{h}_{W^{m}} + 1 \Bigr\} \norm{h}_{W^{m}},\qquad \forall h \in W^{m}(S),
\end{displaymath}
which shows that $A$ maps $W^{m}(S)$ into $W^{k}(S)$. In addition, for any $h_{1} = (u_{1},\omega_{1}),\ h_{2} = (u_{2},\omega_{2}) \in W^{m}(S)$, we have
\begin{align*}
  \norm{v_{1} u_{1,z} - v_{2} u_{2,z}}_{V^{m}} & \leq \norm{\tilde{v} u_{1,z}}_{V^{m}} + \norm{v_{2} \tilde{u}_{z}}_{V^{m}}, \\
  \norm{v_{1} \omega_{1,z} - v_{2} \omega_{2,z}}_{V^{m-1}} & \leq \norm{\tilde{v} \omega_{1,z}}_{V^{m-1}} + \norm{v_{2} \tilde{\omega}_{z}}_{V^{m-1}},
\end{align*}
where $(\tilde{u},\tilde{\omega}) = (u_{1}-u_{2},\omega_{1}-\omega_{2})$ and $\tilde{v} = v_{1}-v_{2}$. Hence another application of Lemma \ref{lmm_cal_ineq_1} yields
\begin{align*}
  \norm{v_{1} u_{1,z} - v_{2} u_{2,z}}_{V^{k+1}} & \leq C \Bigl\{ \norm{u_{1}}_{V^{m+1}} \norm{\tilde{\omega}}_{V^{m}} + \norm{\omega_{2}}_{V^{m}} \norm{\tilde{u}}_{V^{m+1}} \Bigr\}, \\
  \norm{v_{1} \omega_{1,z} - v_{2} \omega_{2,z}}_{V^{k}} & \leq C \Bigl\{ \norm{\omega_{1}}_{V^{m}} + \norm{\omega_{2}}_{V^{m}} \Bigr\} \norm{\tilde{\omega}}_{V^{m}},
\end{align*}
from which we deduce that
\begin{displaymath}
  \norm{A(h_{1}) - A(h_{2})}_{W^{k}} \leq C \Bigl\{ \norm{h_{1}}_{W^{m}} + \norm{h_{2}}_{W^{m}} + 1 \Bigr\} \norm{h_{1} - h_{2}}_{W^{m}},\qquad \forall h_{1},\, h_{2} \in W^{m}(S).
\end{displaymath}
This shows that $A$ is strongly continuous from $W^{m}(S)$ to $W^{k}(S)$.
\end{proof}

In particular, Proposition \ref{prop_cal_ineq_1} shows that $A$ maps $H = W^{m}(S)$ continuously into $X = W^{0}(S)$.

The second estimate we shall prove for the operator $A$ concerns the semi-boundedness of the nonlinear pairing $\langle h,A(h) \rangle$. Note that by Proposition \ref{prop_cal_ineq_1} and Poincar\'{e}'s inequality,
\begin{displaymath}
  A(h) \in W^{2m-1}(S) \subset H,\qquad \forall h \in Y = W^{2m}(S),\ m \geq 1,
\end{displaymath}
so to study $\langle h,A(h) \rangle$ it suffices to consider $(h,A(h))_{H}$.

\begin{lemma}
Let $k \geq 1$ be any positive integer. Let $u,\, \tilde{u} \in V^{k+1}(S),\ \omega,\, \tilde{\omega} \in V^{k}(S)$ and $v,\, \tilde{v} \in V^{k+1}(S)$ be such that $v_{z} = H\omega,\ \tilde{v}_{z} = H\tilde{\omega}$. Then
\begin{subequations}\label{eqn_cal_ineq_2}
\begin{align}
  \abs{(\tilde{u}, \tilde{v} u_{z})_{V^{k}}} & \leq C \norm{\tilde{u}}_{V^{k}} \Bigl\{ \norm{\tilde{\omega}}_{V^{0}} \norm{u}_{V^{k+1}} + \norm{u}_{V^{2}} \norm{\tilde{\omega}}_{V^{k-1}} \Bigr\}, \label{eqn_cal_ineq_2_utu} \\
  \abs{(\tilde{\omega}, \tilde{v} \omega_{z})_{V^{k-1}}} & \leq C \norm{\tilde{\omega}}_{V^{k-1}} \Bigl\{ \norm{\tilde{\omega}}_{V^{0}} \norm{\omega}_{V^{k}} + (1-\delta_{k,1}) \norm{\omega}_{V^{2}} \norm{\tilde{\omega}}_{V^{k-2}} \Bigr\}, \label{eqn_cal_ineq_2_wtw} \\
  \abs{(\tilde{u}, v \tilde{u}_{z})_{V^{k}}} & \leq C \norm{\tilde{u}}_{V^{k}} \Bigl\{ \norm{H\omega}_{L^{\infty}} \norm{\tilde{u}}_{V^{k}} + (1-\delta_{k,1}) \norm{\tilde{u}_{z}}_{L^{\infty}} \norm{\omega}_{V^{k-1}} \Bigr\}, \label{eqn_cal_ineq_2_tu2} \\
  \abs{(\tilde{\omega}, v \tilde{\omega}_{z})_{V^{k-1}}} & \leq C \norm{\tilde{\omega}}_{V^{k-1}} \Bigl\{ \norm{H\omega}_{L^{\infty}} \norm{\tilde{\omega}}_{V^{k-1}}
  + (1-\delta_{k,1}) (1-\delta_{k,2}) \norm{\tilde{\omega}_{z}}_{L^{\infty}} \norm{\omega}_{V^{k-2}} \Bigr\}, \label{eqn_cal_ineq_2_tw2}
\end{align}
\end{subequations}
where $C$ is an absolute constant depending only on $k$ and $L$. \label{lmm_cal_ineq_2}
\end{lemma}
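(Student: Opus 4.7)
My plan is to split the four estimates into two groups. Estimates \eqref{eqn_cal_ineq_2_utu} and \eqref{eqn_cal_ineq_2_wtw} involve the pairings $(\tilde{u}, \tilde{v} u_{z})_{V^{k}}$ and $(\tilde{\omega}, \tilde{v} \omega_{z})_{V^{k-1}}$, in which the highest-order derivative of the product falls on the fixed functions $u$ and $\omega$. For these I would simply apply the Cauchy--Schwarz inequality in the appropriate $V^{j}$ inner product and then invoke Lemma \ref{lmm_cal_ineq_1} to bound $\norm{\tilde{v} u_{z}}_{V^{k}}$ and $\norm{\tilde{v} \omega_{z}}_{V^{k-1}}$; up to the outer factor $\norm{\tilde{u}}_{V^{k}}$ or $\norm{\tilde{\omega}}_{V^{k-1}}$, the resulting right-hand sides match those in \eqref{eqn_cal_ineq_2_utu}--\eqref{eqn_cal_ineq_2_wtw} verbatim. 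No further work is required.

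The substantive step is in \eqref{eqn_cal_ineq_2_tu2} and \eqref{eqn_cal_ineq_2_tw2}, where the top derivative of the factor being multiplied by $v$ lands on $\tilde{u}$ or $\tilde{\omega}$ itself. A naive Cauchy--Schwarz here costs one derivative too many, because $v \partial_{z}^{k+1}\tilde{u}$ is not controlled by $\norm{\tilde{u}}_{V^{k}}$. The standard remedy is a commutator decomposition: I would write
\begin{displaymath}
  \partial_{z}^{k}(v \tilde{u}_{z}) = v\, \partial_{z}^{k+1}\tilde{u} + \bigl[\partial_{z}^{k}(v \tilde{u}_{z}) - v\, \partial_{z}^{k+1}\tilde{u}\bigr],
\end{displaymath}
so that
\begin{displaymath}
  (\tilde{u}, v \tilde{u}_{z})_{V^{k}} = \int_{0}^{L} v\, \partial_{z}^{k}\tilde{u}\cdot \partial_{z}^{k+1}\tilde{u}\, dz + \int_{0}^{L} \partial_{z}^{k}\tilde{u}\cdot \bigl[\partial_{z}^{k}(v\tilde{u}_{z}) - v\, \partial_{z}^{k+1}\tilde{u}\bigr]\, dz.
\end{displaymath}
For the first integral I would integrate by parts using the periodicity of $v$ to rewrite it as $-\tfrac{1}{2}\int_{0}^{L} v_{z}(\partial_{z}^{k}\tilde{u})^{2}\,dz$, which is bounded by $\tfrac{1}{2}\norm{H\omega}_{L^{\infty}}\norm{\tilde{u}}_{V^{k}}^{2}$ since $v_{z}=H\omega$. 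For the second integral, Cauchy--Schwarz followed by the commutator estimate \eqref{eqn_cal_ineq_0_fgd} (applied to $f=v$ and $g=\tilde{u}_{z}$) together with the Hilbert-transform isometry $\norm{v}_{V^{k}} = \norm{H\omega}_{V^{k-1}} = \norm{\omega}_{V^{k-1}}$ produces exactly the two terms $\norm{H\omega}_{L^{\infty}}\norm{\tilde{u}}_{V^{k}}$ and $(1-\delta_{k,1})\norm{\tilde{u}_{z}}_{L^{\infty}}\norm{\omega}_{V^{k-1}}$ on the right-hand side of \eqref{eqn_cal_ineq_2_tu2}.

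The proof of \eqref{eqn_cal_ineq_2_tw2} follows the same template with $\tilde{\omega}$ in place of $\tilde{u}$ and the $V^{k-1}$ inner product in place of $V^{k}$. The only wrinkle is the lowest-order case $k=1$, for which $(\tilde{\omega}, v\tilde{\omega}_{z})_{V^{0}} = \int_{0}^{L} v\tilde{\omega}\tilde{\omega}_{z}\,dz$ and no commutator decomposition is needed: a single integration by parts yields $\tfrac{1}{2}\norm{H\omega}_{L^{\infty}}\norm{\tilde{\omega}}_{V^{0}}^{2}$. For $k\geq 2$, applying \eqref{eqn_cal_ineq_0_fgd} at level $k-1$ introduces the factor $(1-\delta_{k-1,1}) = (1-\delta_{k,2})$ in front of the $\norm{\tilde{\omega}_{z}}_{L^{\infty}}\norm{\omega}_{V^{k-2}}$ contribution; together with the trivial $k=1$ case this gives the combined indicator $(1-\delta_{k,1})(1-\delta_{k,2})$ appearing in \eqref{eqn_cal_ineq_2_tw2}. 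The only real difficulty throughout is careful bookkeeping of these small-$k$ degeneracies; the core analytic input is just the commutator estimate \eqref{eqn_cal_ineq_0_fgd} combined with one integration by parts.
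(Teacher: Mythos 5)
Your proposal is correct and follows essentially the same route as the paper: Cauchy--Schwarz plus Lemma \ref{lmm_cal_ineq_1} for \eqref{eqn_cal_ineq_2_utu}--\eqref{eqn_cal_ineq_2_wtw}, and for \eqref{eqn_cal_ineq_2_tu2}--\eqref{eqn_cal_ineq_2_tw2} the same commutator decomposition, integration by parts giving $-\tfrac12\int_0^L v_z(\partial_z^k\tilde u)^2\,dz$, the commutator estimate \eqref{eqn_cal_ineq_0_fgd}, and the Hilbert-transform isometry. Your treatment of the degenerate cases (the vanishing commutator at $k=1$ and the $(1-\delta_{k,1})(1-\delta_{k,2})$ bookkeeping) matches the paper's as well.
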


\begin{proof}
\eqref{eqn_cal_ineq_2_utu} is a direct consequence of the Cauchy-Schwarz inequality and estimate \eqref{eqn_cal_ineq_1_u}:
\begin{align*}
  \abs{(\tilde{u}, \tilde{v} u_{z})_{V^{k}}} & \leq \norm{\tilde{u}}_{V^{k}} \norm{\tilde{v} u_{z}}_{V^{k}} \leq C \norm{\tilde{u}}_{V^{k}} \Bigl\{ \norm{\tilde{\omega}}_{V^{0}} \norm{u}_{V^{k+1}} + \norm{u}_{V^{2}} \norm{\tilde{\omega}}_{V^{k-1}} \Bigr\}.
\end{align*}
Likewise, \eqref{eqn_cal_ineq_2_wtw} follows from the Cauchy-Schwarz inequality and estimate \eqref{eqn_cal_ineq_1_w}:
\begin{align*}
  \abs{(\tilde{\omega}, \tilde{v} \omega_{z})_{V^{k-1}}} & \leq \norm{\tilde{\omega}}_{V^{k-1}} \norm{\tilde{v} \omega_{z}}_{V^{k-1}}
  \leq C \norm{\tilde{\omega}}_{V^{k-1}} \Bigl\{ \norm{\tilde{\omega}}_{V^{0}} \norm{\omega}_{V^{k}} + (1-\delta_{k,1}) \norm{\omega}_{V^{2}} \norm{\tilde{\omega}}_{V^{k-2}} \Bigr\}.
\end{align*}
To prove \eqref{eqn_cal_ineq_2_tu2}, we introduce the decomposition
\begin{displaymath}
  (\tilde{u}, v \tilde{u}_{z})_{V^{k}} = (\partial_{z}^{k} \tilde{u}, \partial_{z}^{k} (v \tilde{u}_{z}) - v \partial_{z}^{k+1} \tilde{u})_{V^{0}} + (\partial_{z}^{k} \tilde{u}, v \partial_{z}^{k+1} \tilde{u})_{V^{0}} =: I_{1} + I_{2}.
\end{displaymath}
The first term $I_{1}$ on the right-hand side can be bounded using the calculus inequality \eqref{eqn_cal_ineq_0_fgd}:
\begin{align*}
  \abs{I_{1}} & \leq \norm{\partial_{z}^{k} \tilde{u}}_{V^{0}} \norm{\partial_{z}^{k} (v \tilde{u}_{z}) - v \partial_{z}^{k+1} \tilde{u}}_{V^{0}} \\
  & \leq C \norm{\tilde{u}}_{V^{k}} \Bigl\{ \norm{v_{z}}_{L^{\infty}} \norm{\tilde{u}}_{V^{k}} + (1-\delta_{k,1}) \norm{v}_{V^{k}} \norm{\tilde{u}_{z}}_{L^{\infty}} \Bigr\}.
\end{align*}
As for $I_{2}$, integration by parts yields
\begin{displaymath}
  I_{2} = \int_{0}^{L} \partial_{z}^{k} \tilde{u} \cdot v \partial_{z}^{k+1} \tilde{u}\,dz = -\frac{1}{2} \int_{0}^{L} v_{z} (\partial_{z}^{k} \tilde{u})^{2}\,dz,
\end{displaymath}
hence
\begin{displaymath}
  \abs{I_{2}} \leq C \norm{v_{z}}_{L^{\infty}} \norm{\tilde{u}}_{V^{k}}^{2}.
\end{displaymath}
In summary,
\begin{displaymath}
  \abs{(\tilde{u}, v \tilde{u}_{z})_{V^{k}}} \leq C \norm{\tilde{u}}_{V^{k}} \Bigl\{ \norm{H\omega}_{L^{\infty}} \norm{\tilde{u}}_{V^{k}} + (1-\delta_{k,1}) \norm{\tilde{u}_{z}}_{L^{\infty}} \norm{\omega}_{V^{k-1}} \Bigr\},
\end{displaymath}
as is to be shown.

Finally, to prove \eqref{eqn_cal_ineq_2_tw2} we write
\begin{displaymath}
  (\tilde{\omega}, v \tilde{\omega}_{z})_{V^{k-1}} = (\partial_{z}^{k-1} \tilde{\omega}, \partial_{z}^{k-1} (v \tilde{\omega}_{z}) - v \partial_{z}^{k} \tilde{\omega})_{V^{0}}
  + (\partial_{z}^{k-1} \tilde{\omega}, v \partial_{z}^{k} \tilde{\omega})_{V^{0}} =: I_{3} + I_{4},
\end{displaymath}
where
\begin{align*}
  k & = 1: & I_{3} & = 0, \\
  k & \geq 2: & \abs{I_{3}} & \leq \norm{\partial_{z}^{k-1} \tilde{\omega}}_{V^{0}} \norm{\partial_{z}^{k-1} (v \tilde{\omega}_{z}) - v \partial_{z}^{k} \tilde{\omega}}_{V^{0}} \\
  & & & \leq C \norm{\tilde{\omega}}_{V^{k-1}} \Bigl\{ \norm{v_{z}}_{L^{\infty}} \norm{\tilde{\omega}}_{V^{k-1}} + (1-\delta_{k,2}) \norm{v}_{V^{k-1}} \norm{\tilde{\omega}_{z}}_{L^{\infty}} \Bigr\},
\end{align*}
and
\begin{displaymath}
  I_{4} = \int_{0}^{L} \partial_{z}^{k-1} \tilde{\omega} \cdot v \partial_{z}^{k} \tilde{\omega}\,dz = -\frac{1}{2} \int_{0}^{L} v_{z} (\partial_{z}^{k-1} \tilde{\omega})^{2}\,dz.
\end{displaymath}
Therefore,
\begin{displaymath}
  \abs{(\tilde{\omega}, v \tilde{\omega}_{z})_{V^{k-1}}} \leq C \norm{\tilde{\omega}}_{V^{k-1}} \Bigl\{ \norm{H\omega}_{L^{\infty}} \norm{\tilde{\omega}}_{V^{k-1}} + (1-\delta_{k,1}) (1-\delta_{k,2}) \norm{\tilde{\omega}_{z}}_{L^{\infty}} \norm{\omega}_{V^{k-2}} \Bigr\},
\end{displaymath}
as is to be shown.
\end{proof}

As an immediate consequence of Lemma \ref{lmm_cal_ineq_2}, we have the following
\begin{prop}
Let $m \geq 1$ be any positive integer. The nonlinear operator $A$ defined by
\begin{displaymath}
  A(h) = (v u_{z},v \omega_{z} - u_{z}),\qquad h = (u,\omega) \in H = W^{m}(S),\ v \in V^{m+1}(S) \text{ with } v_{z} = H\omega,
\end{displaymath}
satisfies the estimate
\begin{displaymath}
  \langle h,A(h) \rangle \geq -\beta(\norm{h}_{H}^{2}),\qquad \forall h \in Y = W^{2m}(S),
\end{displaymath}
where
\begin{displaymath}
  \beta(r) = Cr(1+r^{1/2}),
\end{displaymath}
with $C$ being an absolute constant depending only on $m$ and $L$. \label{prop_cal_ineq_2}
\end{prop}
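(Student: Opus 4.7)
The plan is to directly unpack $\langle h,A(h)\rangle$ as a sum of three inner products and bound each one using Lemma \ref{lmm_cal_ineq_2}. First I would observe that by Proposition \ref{prop_cal_ineq_1} together with Poincar\'e's inequality, $A(h) \in W^{2m-1}(S) \subset H$ for every $h \in Y = W^{2m}(S)$, so by the defining property of the admissible triplet one has $\langle h, A(h)\rangle = (h,A(h))_{H}$. Expanding the inner product on $W^{m}(S) = V^{m+1}(S) \times V^{m}(S)$ with $h = (u,\omega)$ gives
\begin{displaymath}
  (h,A(h))_{H} = (u, v u_{z})_{V^{m+1}} + (\omega, v \omega_{z})_{V^{m}} - (\omega, u_{z})_{V^{m}}.
\end{displaymath}

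Next I would estimate each term from below by applying Lemma \ref{lmm_cal_ineq_2} with the specializations $\tilde{u} = u$, $\tilde{\omega} = \omega$, $\tilde{v} = v$. For the first term, inequality \eqref{eqn_cal_ineq_2_tu2} with $k = m+1 \geq 2$ gives a bound of the form $C \norm{u}_{V^{m+1}} \{ \norm{H\omega}_{L^{\infty}} \norm{u}_{V^{m+1}} + \norm{u_{z}}_{L^{\infty}} \norm{\omega}_{V^{m}} \}$. For the second term, inequality \eqref{eqn_cal_ineq_2_tw2} with $k = m+1$ yields $C \norm{\omega}_{V^{m}} \{ \norm{H\omega}_{L^{\infty}} \norm{\omega}_{V^{m}} + (1-\delta_{m+1,2}) \norm{\omega_{z}}_{L^{\infty}} \norm{\omega}_{V^{m-1}} \}$, where the Kronecker factor harmlessly kills the extra term when $m = 1$. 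The third term is controlled by Cauchy--Schwarz: $|(\omega,u_{z})_{V^{m}}| \leq \norm{\omega}_{V^{m}} \norm{u}_{V^{m+1}}$.

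Then I would reduce every right-hand side to the single norm $\norm{h}_{H} = \norm{h}_{W^{m}}$. By the Sobolev embedding \eqref{eqn_cal_fact_s}, the isometry of the Hilbert transform, and Poincar\'e's inequality \eqref{eqn_cal_fact_p},
\begin{displaymath}
  \norm{H\omega}_{L^{\infty}} \leq \tilde{c}_{0} \norm{(H\omega)_{z}}_{V^{0}} = \tilde{c}_{0} \norm{\omega_{z}}_{V^{0}} \leq C \norm{\omega}_{V^{m}},
\end{displaymath}
and likewise $\norm{u_{z}}_{L^{\infty}} \leq C \norm{u}_{V^{m+1}}$, $\norm{\omega_{z}}_{L^{\infty}} \leq C \norm{\omega}_{V^{m}}$, $\norm{\omega}_{V^{m-1}} \leq C \norm{\omega}_{V^{m}}$. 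Collecting the three bounds yields
\begin{displaymath}
  (h,A(h))_{H} \geq -C \Bigl\{ \norm{h}_{H}^{3} + \norm{h}_{H}^{2} \Bigr\} = -C \norm{h}_{H}^{2} \bigl(1 + \norm{h}_{H}\bigr),
\end{displaymath}
which is precisely $-\beta(\norm{h}_{H}^{2})$ with $\beta(r) = Cr(1+r^{1/2})$.

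The estimates in Lemma \ref{lmm_cal_ineq_2} have already been arranged to absorb the top-order commutator (via integration by parts producing $\tfrac12 v_{z}(\partial_{z}^{k}\tilde{u})^{2}$) and the remaining Moser-type pieces, so no genuine obstacle remains; the only points requiring care are the bookkeeping of which Kronecker deltas vanish for small $m$ (only $k = m+1 \geq 2$ is ever used, so $1-\delta_{k,1} = 1$ always) and verifying that the cubic term $\norm{h}_{H}^{3}$, rather than some higher power, suffices, which is exactly what produces the growth $\beta(r) \sim r^{3/2}$ predicted in the statement.
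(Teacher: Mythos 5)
Your proposal is correct and follows essentially the same route as the paper: identify $\langle h,A(h)\rangle$ with $(h,A(h))_{H}$ via $A(h)\in W^{2m-1}(S)\subset H$, expand into the three inner products, bound the two nonlinear ones with \eqref{eqn_cal_ineq_2_tu2}--\eqref{eqn_cal_ineq_2_tw2} at $k=m+1$ (with $\tilde u=u$, $\tilde\omega=\omega$) and the linear one by Cauchy--Schwarz, then reduce everything to $\norm{h}_{H}$ via Sobolev embedding, the Hilbert-transform isometry, and Poincar\'e. The only difference is cosmetic bookkeeping (you make the $(\omega,u_z)_{V^m}$ term and the Kronecker-delta cancellation for $m=1$ explicit, which the paper absorbs into its final $C\{\norm{h}_H+1\}\norm{h}_H^2$ bound), so no further comment is needed.
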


\begin{proof}
For each $h = (u,\omega) \in Y \subset W^{m+1}(S)$, we have, by Proposition \ref{prop_cal_ineq_1} and Poincar\'{e}'s inequality, $A(h) \in W^{2m-1}(S) \subset H$. Hence
\begin{displaymath}
  \langle h,A(h) \rangle = (h,A(h))_{H} = (u,v u_{z})_{V^{m+1}} + (\omega,v \omega_{z} - u_{z})_{V^{m}}.
\end{displaymath}
Using estimates \eqref{eqn_cal_ineq_2_tu2}--\eqref{eqn_cal_ineq_2_tw2} from Lemma \ref{lmm_cal_ineq_2} with $k = m+1,\ \tilde{u} = u$ and $\tilde{\omega} = \omega$, we have, for $m \geq 1$,
\begin{align*}
  \abs{(u,v u_{z})_{V^{m+1}}} & \leq C \norm{u}_{V^{m+1}} \Bigl\{ \norm{H\omega}_{L^{\infty}} \norm{u}_{V^{m+1}} + \norm{u_{z}}_{L^{\infty}} \norm{\omega}_{V^{m}} \Bigr\} \\
  & \leq C \norm{u}_{V^{m+1}} \Bigl\{ \norm{\omega}_{V^{m}} \norm{u}_{V^{m+1}} + \norm{u}_{V^{m+1}} \norm{\omega}_{V^{m}} \Bigr\} \leq C \norm{\omega}_{V^{m}} \norm{u}_{V^{m+1}}^{2}, \\
  \abs{(\omega,v \omega_{z})_{V^{m}}} & \leq C \norm{\omega}_{V^{m}} \Bigl\{ \norm{H\omega}_{L^{\infty}} \norm{\omega}_{V^{m}} + (1-\delta_{m,1}) \norm{\omega_{z}}_{L^{\infty}} \norm{\omega}_{V^{m-1}} \Bigr\} \\
  & \leq C \norm{\omega}_{V^{m}} \Bigl\{ \norm{\omega}_{V^{m}}^{2} + (1-\delta_{m,1}) \norm{\omega}_{V^{m}}^{2} \Bigr\} \leq C \norm{\omega}_{V^{m}}^{3}.
\end{align*}
This shows that
\begin{displaymath}
  \abs{\langle h,A(h) \rangle} \leq C \Bigl\{ \norm{h}_{H} + 1 \Bigr\} \norm{h}_{H}^{2},\qquad \forall h \in Y,
\end{displaymath}
and hence the proposition follows.
\end{proof}

\subsection{Proof of Theorem \ref{thm_1d_loc_ex}}\label{ssec_wp_loc_ex}
Now we are ready to prove Theorem \ref{thm_1d_loc_ex}. By Proposition \ref{prop_cal_ineq_1} and Proposition \ref{prop_cal_ineq_2}, the conditions of Theorem \ref{thm_abs_evol} are satisfied, hence for any given initial data $h_{0} = (u_{0},\omega_{0})
\in H = W^{m}(S)$, there exists $T > 0$ depending only on $\norm{h_{0}}_{H}^{2} = \norm{u_{0}}_{V^{m+1}}^{2} + \norm{\omega_{0}}_{V^{m}}^{2}$ such that the 1D model \eqref{eqn_eat_1d} has a solution
\begin{displaymath}
  h = (u,\omega) \in C_{w}([0,T]; H) \cap C_{w}^{1}([0,T]; X).
\end{displaymath}
To prove uniqueness, assume $h_{1} = (u_{1},\omega_{1}),\ h_{2} = (u_{2},\omega_{2})$ are two solutions to \eqref{eqn_eat_1d} with the same initial data $h_{0}$. Subtracting the two equations satisfied by $h_{1},\ h_{2}$, taking the $X$-inner product of
the resulting equation with $\tilde{h} = (\tilde{u},\tilde{\omega}) = (u_{1}-u_{2},\omega_{1}-\omega_{2})$, and observing that $\tilde{h} \in \mathrm{Lip}([0,T]; X)$ (which implies $\partial_{t} \norm{\tilde{h}}_{X}^{2} = 2(\tilde{h},\tilde{h}_{t})_{X}$
a.e. $t \in [0,T]$), we obtain
\begin{displaymath}
  \frac{1}{2} \frac{d}{dt} \norm{\tilde{h}}_{X}^{2} = -(\tilde{h}, A(h_{1})-A(h_{2}))_{X} = I_{1} + I_{21} + I_{22},
\end{displaymath}
where
\begin{align*}
  I_{1} & = -(\tilde{u}, v_{1} u_{1,z} - v_{2} u_{2,z})_{V^{1}} = -(\tilde{u}, \tilde{v} u_{1,z})_{V^{1}} - (\tilde{u}, v_{2} \tilde{u}_{z})_{V^{1}},& \tilde{v} & = v_{1}-v_{2}, \\
  I_{21} & = -(\tilde{\omega}, v_{1} \omega_{1,z} - v_{2} \omega_{2,z})_{V^{0}} = -(\tilde{\omega}, \tilde{v} \omega_{1,z})_{V^{0}} - (\tilde{\omega}, v_{2} \tilde{\omega}_{z})_{V^{0}},& I_{22} & = (\tilde{\omega}, \tilde{u}_{z})_{V^{0}}.
\end{align*}
Since $h_{1},\, h_{2} \in H \subset W^{1}(S)$, Lemma \ref{lmm_cal_ineq_2} applies with $k = 1$, yielding
\begin{align*}
  \abs{I_{1}} & \leq C \Bigl\{ \norm{\tilde{\omega}}_{V^{0}} \norm{\tilde{u}}_{V^{1}} \norm{u_{1}}_{V^{2}} + \norm{\omega_{2}}_{V^{1}} \norm{\tilde{u}}_{V^{1}}^{2} \Bigr\}, \\
  \abs{I_{21}} & \leq C \Bigl\{ \norm{\tilde{\omega}}_{V^{0}}^{2} \norm{\omega_{1}}_{V^{1}} + \norm{\omega_{2}}_{V^{1}} \norm{\tilde{\omega}}_{V^{0}}^{2} \Bigr\}.
\end{align*}
On the other hand, the Cauchy-Schwarz inequality implies that
\begin{displaymath}
  \abs{I_{22}} \leq \norm{\tilde{\omega}}_{V^{0}} \norm{\tilde{u}}_{V^{1}}.
\end{displaymath}
Hence $\norm{\tilde{h}}_{X}^{2}$ satisfies
\begin{displaymath}
  \frac{d}{dt} \norm{\tilde{h}}_{X}^{2} \leq C \Bigl\{ \norm{h_{1}}_{W^{1}} + \norm{h_{2}}_{W^{1}} + 1 \Bigr\} \norm{\tilde{h}}_{X}^{2}.
\end{displaymath}
Now by Gronwall's lemma,
\begin{displaymath}
  \norm{\tilde{h}(t)}_{X}^{2} \leq C \norm{\tilde{h}(0)}_{X}^{2} \exp \biggl\{ \int_{0}^{t} \Bigl[ \norm{h_{1}(s)}_{W^{1}} + \norm{h_{2}(s)}_{W^{1}} + 1 \Bigr]\,ds \biggr\}.
\end{displaymath}
Since $\tilde{h}(0) = 0$ and
\begin{displaymath}
  h_{1},\, h_{2} \in C_{w}([0,T]; H) \subset L^{\infty}([0,T]; H) \subset L^{1}([0,T]; W^{1}(S)),
\end{displaymath}
we conclude that $\tilde{h} \equiv 0$ on $[0,T]$, which proves the uniqueness of the solution.

To establish the strong continuity of the solution, we follow a standard argument which starts by showing that the solution determined by Theorem \ref{thm_abs_evol} is strongly right continuous at $t = 0$. Indeed, Theorem \ref{thm_abs_evol} implies that
$\norm{u(t)}_{H}^{2} \leq \rho(t)$ and, in particular, (see \eqref{eqn_abs_maj})
\begin{displaymath}
  \limsup_{t \to 0^{+}} \norm{h(t)}_{H}^{2} \leq \limsup_{t \to 0^{+}} \rho(t) = \norm{h_{0}}_{H}^{2}.
\end{displaymath}
On the other hand, the weak continuity of $h(t)$ at $t = 0$ implies that
\begin{displaymath}
  \norm{h_{0}}_{H}^{2} \leq \liminf_{t \to 0^{+}} \norm{h(t)}_{H}^{2}.
\end{displaymath}
Hence $\norm{h(t)}_{H} \to \norm{h(0)}_{H}$ as $t \to 0^{+}$, which establishes the strong right continuity of $h(t)$ at $t = 0$. To prove the right continuity of $h(t)$ at any $t_{0} \in [0,T]$, let $\tilde{h}(t)$ be the solution of the 1D model
\eqref{eqn_eat_1d} for $t \geq t_{0}$ with initial data $\tilde{h}(t_{0}) = h(t_{0})$. Then $\tilde{h}(t)$ is strongly right continuous at $t = t_{0}$. But the two solutions $h(t)$ and $\tilde{h}(t)$ must coincide for $t \geq t_{0}$ by uniqueness, so
$h(t)$ is strongly right continuous at $t = t_{0}$. This shows that $h(t)$ is strongly right continuous on $[0,T]$. Since the 1D model \eqref{eqn_eat_1d} is time-reversible, which is apparent from the two-sided estimate of the nonlinear pairing $\langle
h,A(h) \rangle$, it follows that $h(t)$ is strongly continuous on $[0,T]$. Finally, the higher regularity and strong continuity of $h_{t}(t)$ follows directly from Proposition \ref{prop_cal_ineq_1}, which asserts that
\begin{displaymath}
  h_{t}(t) = -A(h(t)) \in W^{m-1}(S),\qquad \forall t \in [0,T],
\end{displaymath}
and
\begin{multline*}
  \norm{h_{t}(t_{1}) - h_{t}(t_{2})}_{W^{m-1}} = \norm{A(h(t_{1})) - A(h(t_{2}))}_{W^{m-1}} \\
  \leq C \Bigl\{ \norm{h(t_{1})}_{W^{m}} + \norm{h(t_{2})}_{W^{m}} + 1 \Bigr\} \norm{h(t_{1}) - h(t_{2})}_{W^{m}},\qquad \forall t_{1},\, t_{2} \in [0,T].
\end{multline*}
The proof of Theorem \ref{thm_1d_loc_ex} is complete.

\subsection{Proof of Theorem \ref{thm_1d_reg}}\label{ssec_wp_reg}
We next prove Theorem \ref{thm_1d_reg}. The key of the proof is to find an estimate of the existence time $T$ that depends only on the low-norm $\norm{u_{0}}_{V^{2}},\ \norm{\omega_{0}}_{V^{1}}$ of the initial data. In \citet{kl1984}, this is
accomplished using the technique of norm compression. Here we give a different proof where the idea is to directly bound the high-norms of the solution in terms of its low-norms. In particular, we have

\begin{prop}
Let $m \geq 1$ be any positive integer and let
\begin{displaymath}
  h = (u,\omega) \in C([0,T]; W^{m}(S)) \cap C^{1}([0,T]; W^{m-1}(S))
\end{displaymath}
be a solution of \eqref{eqn_eat_1d} in class $CV^{m}$ on $[0,T]$, with the initial data $h_{0} \in W^{m}(S)$. Then
\begin{subequations}
\begin{equation}
  \max_{t \in [0,T]} \norm{h(t)}_{W^{m}} \leq M_{m}(T) \norm{h_{0}}_{W^{m}},
  \label{eqn_reg_hm_m}
\end{equation}
where $M_{m}(T)$ is a constant depending on $m,\ \norm{h_{0}}_{W^{\min(m,2)}}$, and
\begin{displaymath}
  M_{0}(T) = \exp \biggl\{ \int_{0}^{T} \norm{H\omega(t)}_{L^{\infty}}\,dt \biggr\}.
\end{displaymath}
In addition, there exists an absolute constant $C_{0}$ depending only on $L$ such that
\begin{equation}
  \norm{h(t)}_{W^{1}} \leq \frac{\re^{C_{0} t} \norm{h_{0}}_{W^{1}}}{1 - (\re^{C_{0} t}-1) \norm{h_{0}}_{W^{1}}},\qquad \forall t \in [0,T^{*}),
  \label{eqn_reg_hm_1}
\end{equation}
\end{subequations}
where $T^{*}$ is the first time at which the right-hand side of \eqref{eqn_reg_hm_1} becomes unbounded. \label{prop_reg_hm}
\end{prop}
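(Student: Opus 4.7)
The plan is to derive a single energy differential inequality for $\norm{h(t)}_{W^m}^2 = \norm{u}_{V^{m+1}}^2 + \norm{\omega}_{V^m}^2$ and then read off the three regimes $m=1$, $m=2$, and $m\geq 3$ from it. Differentiating in time and substituting $u_t = -vu_z$, $\omega_t = -v\omega_z + u_z$, one obtains
\begin{displaymath}
  \tfrac{1}{2}\tfrac{d}{dt}\norm{h}_{W^m}^2 = -(u,vu_z)_{V^{m+1}} - (\omega,v\omega_z)_{V^m} + (\omega,u_z)_{V^m}.
\end{displaymath}
Applying Lemma \ref{lmm_cal_ineq_2} at $k=m+1$ with $\tilde u=u$, $\tilde\omega=\omega$ to the first two pairings, and using Cauchy--Schwarz $|(\omega,u_z)_{V^m}| \leq \norm{\omega}_{V^m}\norm{u}_{V^{m+1}} \leq \tfrac{1}{2}\norm{h}_{W^m}^2$ for the third, produces the closed inequality
\begin{displaymath}
  \tfrac{d}{dt}\norm{h}_{W^m}^2 \leq C\bigl\{1 + \norm{H\omega}_{L^\infty} + \norm{u_z}_{L^\infty} + (1-\delta_{m,1})\norm{\omega_z}_{L^\infty}\bigr\}\norm{h}_{W^m}^2,
\end{displaymath}
in which the Kronecker factor $(1-\delta_{m,1})$ descends from the $(1-\delta_{k,1})(1-\delta_{k,2})$ factor in \eqref{eqn_cal_ineq_2_tw2} evaluated at $k=m+1$.

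For the base cases $m=1$ and $m=2$, I would close the inequality at the low Sobolev level: the imbedding \eqref{eqn_cal_fact_s} controls each of $\norm{H\omega}_{L^\infty}$, $\norm{u_z}_{L^\infty}$ and $\norm{\omega_z}_{L^\infty}$ by $C\norm{h}_{W^{\min(m,2)}}$, which reduces the estimate to the scalar ODE $y' \leq Cy(1+y)$ with $y = \norm{h}_{W^m}$. The substitution $w = y/(1+y)$ converts this into $w' \leq Cw$, so that $w(t) \leq w(0)\re^{Ct}$; inverting $y \mapsto y/(1+y)$ then yields exactly the explicit profile \eqref{eqn_reg_hm_1}.

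For $m \geq 3$, I would instead freeze $\norm{u_z}_{L^\infty}$ and $\norm{\omega_z}_{L^\infty}$ at the low-order level, using Sobolev to bound them by $C\norm{h}_{W^2}$, and invoke the already-established $m=2$ estimate to control $\max_{[0,T]}\norm{h(t)}_{W^2}$ in terms of $\norm{h_0}_{W^2}$ and $T$ alone. The term $\norm{H\omega}_{L^\infty}$ is left intact so that it accounts precisely for the $M_0(T)$ factor. Grönwall's lemma then gives
\begin{displaymath}
  \norm{h(t)}_{W^m}^2 \leq \norm{h_0}_{W^m}^2 \exp\Bigl\{C\int_0^T \bigl[1 + \norm{H\omega(\cdot,s)}_{L^\infty} + \norm{h(\cdot,s)}_{W^2}\bigr]\,ds\Bigr\},
\end{displaymath}
which is the claimed $M_m(T)\norm{h_0}_{W^m}^2$ with $M_m(T)$ depending only on $m$, $\norm{h_0}_{W^2}$, and $M_0(T)$.

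The step I expect to be most delicate is the $m=1$ case: closing the inequality purely in the $W^1$ norm requires the $\norm{\omega_z}_{L^\infty}\norm{\omega}_{V^{k-2}}$ term in \eqref{eqn_cal_ineq_2_tw2} to vanish, which is supplied by the double Kronecker factor exactly at $k=2$. Without this cancellation one could only close the inequality at the $W^2$ level, and the explicit profile \eqref{eqn_reg_hm_1}---with a blowup time expressed directly in terms of $\norm{h_0}_{W^1}$---would be lost. All remaining work is bookkeeping of the Sobolev, Poincar\'e, and Hilbert-transform-isometry constants from \eqref{eqn_cal_fact}.
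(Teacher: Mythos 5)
Your energy identity, the application of Lemma \ref{lmm_cal_ineq_2} at $k=m+1$, and the Riccati computation are fine as far as they go, and in particular your substitution $w=y/(1+y)$ correctly reproduces the low-norm bound \eqref{eqn_reg_hm_1}. But the main claim \eqref{eqn_reg_hm_m} is not established. The whole point of the proposition is that $M_m(T)$ depends only on $m$, $\norm{h_0}_{W^{\min(m,2)}}$ and $M_0(T)=\exp\{\int_0^T\norm{H\omega(t)}_{L^\infty}dt\}$ --- this is exactly the dependence consumed later by Theorem \ref{thm_1d_reg} and the BKM-type criterion. In your base cases $m=1,2$ you absorb $\norm{u_z}_{L^\infty}$ (and $\norm{\omega_z}_{L^\infty}$) into $C\norm{h}_{W^{\min(m,2)}}$, which turns the inequality into $y'\leq Cy(1+y)$; the resulting bound blows up at a time $T^*$ determined solely by $\norm{h_0}_{W^1}$, which may well be smaller than $T$, and it contains no trace of $M_0(T)$. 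So it proves only \eqref{eqn_reg_hm_1}, not \eqref{eqn_reg_hm_m} for $m=1,2$. The cascade to $m\geq 3$ then collapses as well: your claim that $\max_{[0,T]}\norm{h(t)}_{W^2}$ is controlled ``in terms of $\norm{h_0}_{W^2}$ and $T$ alone'' is not available --- no such unconditional a priori bound exists for this model (Section \ref{sec_blowup} argues the solution likely blows up), and if it did exist the proposition and the blowup criterion would be pointless.

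The missing ingredient is the paper's first step: because $u$ is purely transported, differentiating \eqref{eqn_eat_1d_u} in $z$ gives $u_{zt}+vu_{zz}=-v_zu_z$, and integration along the characteristics $\frac{d}{dt}z(t)=v[z(t),t]$ yields $u_z[z(t),t]=u_{0z}(\xi)\exp\{-\int_0^tH\omega[z(s),s]\,ds\}$, hence $\norm{u_z(t)}_{L^\infty}\leq M_0(t)\norm{u_{0z}}_{L^\infty}$. Feeding this into the $k=1$ energy inequality makes it \emph{linear} in $\norm{h}_{W^1}^2$ with coefficient $K_0\{\norm{H\omega}_{L^\infty}+M_0\}$, so Gronwall gives an $M_1(T)$ depending only on $M_0(T)$ and $\norm{h_0}_{W^1}$, valid on all of $[0,T]$; the $k=2$ estimate then uses $M_1$, and the levels $2<k\leq m$ use $M_2$ --- this cascade is what produces the claimed dependence on $\norm{h_0}_{W^{\min(m,2)}}$ and $M_0(T)$. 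A secondary gap: applying Lemma \ref{lmm_cal_ineq_2} at $k=m+1$ with $\tilde u=u$, $\tilde\omega=\omega$ is only formal when $h$ merely lies in $W^m(S)$; the paper justifies the manipulation by regarding $h$ as the solution of the linear transport problem \eqref{eqn_1d_lin} with coefficient $v$ and passing to the limit from smooth data, and your argument should include such a density step.
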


\begin{proof}
We first derive an upper bound for $\norm{u_{z}}_{L^{\infty}}$ in terms of $\norm{H\omega}_{L^{\infty}}$. To begin with, we differentiate \eqref{eqn_eat_1d_u} with respect to $z$ (note that $u_{t} \in C([0,T]; V^{1}(S))$):
\begin{displaymath}
  u_{zt} + v u_{zz} = -v_{z} u_{z},
\end{displaymath}
and introduce the characteristic variable
\begin{displaymath}
  \frac{d}{dt} z(t) = v[z(t),t],\qquad z(0) = \xi \in [0,L].
\end{displaymath}
This leads to the ODE
\begin{displaymath}
  \frac{d}{dt} u_{z}[z(t),t] = -H\omega[z(t),t] \cdot u_{z}[z(t),t].
\end{displaymath}
Its solution is easily found to be
\begin{displaymath}
  u_{z}[z(t),t] = u_{0z}(\xi) \exp \biggl\{ -\int_{0}^{t} H\omega[z(s),s]\,ds \biggr\},
\end{displaymath}
hence
\begin{displaymath}
  \norm{u_{z}(t)}_{L^{\infty}} \leq \norm{u_{0z}}_{L^{\infty}} \exp \biggl\{ \int_{0}^{t} \norm{H\omega(s)}_{L^{\infty}}\,ds \biggr\} =: M_{0}(t) \norm{u_{0z}}_{L^{\infty}},
\end{displaymath}
where
\begin{displaymath}
  M_{0}(t) = \exp \biggl\{ \int_{0}^{t} \norm{H\omega(s)}_{L^{\infty}}\,ds \biggr\},
\end{displaymath}
which is the desired estimate.

Next, we derive an upper bound for the high-norm $\norm{h}_{W^{m}}$ in terms of $\norm{H\omega}_{L^{\infty}}$. To simplify the argument, we first assume $h$ is sufficiently smooth, e.g. $h$ belongs to class $CV^{m+1}$; then for each $1 \leq k \leq m$,
taking the $W^{k}$-inner product of equations \eqref{eqn_eat_1d_u}--\eqref{eqn_eat_1d_w} with $h$ yields
\begin{subequations}\label{eqn_reg_est}
\begin{equation}
  \frac{1}{2} \frac{d}{dt} \norm{h}_{W^{k}}^{2} = -(h,A(h))_{W^{k}} = -(u,v u_{z})_{V^{k+1}} - (\omega,v \omega_{z} - u_{z})_{V^{k}}.
  \label{eqn_reg_est_hk}
\end{equation}
Applying estimates \eqref{eqn_cal_ineq_2_tu2}--\eqref{eqn_cal_ineq_2_tw2} from Lemma \ref{lmm_cal_ineq_2} with $k \gets k+1,\ \tilde{u} = u$ and $\tilde{\omega} = \omega$ (again assuming a sufficiently smooth $h$), we obtain
\begin{align}
  \abs{(u,v u_{z})_{V^{k+1}}} & \leq C \norm{u}_{V^{k+1}} \Bigl\{ \norm{H\omega}_{L^{\infty}} \norm{u}_{V^{k+1}} + \norm{u_{z}}_{L^{\infty}} \norm{\omega}_{V^{k}} \Bigr\}, \label{eqn_reg_est_u} \\
  \abs{(\omega,v \omega_{z})_{V^{k}}} & \leq C \norm{\omega}_{V^{k}} \Bigl\{ \norm{H\omega}_{L^{\infty}} \norm{\omega}_{V^{k}} + (1-\delta_{k,1}) \norm{\omega_{z}}_{L^{\infty}} \norm{\omega}_{V^{k-1}} \Bigr\}. \label{eqn_reg_est_w}
\end{align}
\end{subequations}
For $k = 1$, \eqref{eqn_reg_est} implies
\begin{displaymath}
  \frac{d}{dt} \norm{h}_{W^{1}}^{2} \leq C \Bigl\{ \norm{H\omega}_{L^{\infty}} + \norm{u_{z}}_{L^{\infty}} + 1 \Bigr\} \norm{h}_{W^{1}}^{2} \leq K_{0} \Bigl\{ \norm{H\omega}_{L^{\infty}} + M_{0} \Bigr\} \norm{h}_{W^{1}}^{2},
\end{displaymath}
where $K_{0}$ is a constant depending on $\norm{h_{0}}_{W^{1}}$ (without loss of generality we assume $K_{0} \geq 2$). Then by Gronwall's lemma,
\begin{displaymath}
  \norm{h(t)}_{W^{1}} \leq M_{1}(t) \norm{h_{0}}_{W^{1}},\qquad \forall t \in [0,T],
\end{displaymath}
where
\begin{displaymath}
  M_{1}(t) = \exp \biggl\{ \frac{1}{2}\, K_{0} \int_{0}^{t} \Bigl[ \norm{H\omega(s)}_{L^{\infty}} + M_{0}(s) \Bigr]\,ds \biggr\} \geq M_{0}(t).
\end{displaymath}
For $k = 2$, \eqref{eqn_reg_est} implies
\begin{displaymath}
  \frac{d}{dt} \norm{h}_{W^{2}}^{2} \leq C \Bigl\{ \norm{H\omega}_{L^{\infty}} + \norm{u_{z}}_{L^{\infty}} + \norm{\omega}_{V^{1}} + 1 \Bigr\} \norm{h}_{W^{2}}^{2} \leq K_{1} \Bigl\{ \norm{H\omega}_{L^{\infty}} + M_{1} \Bigr\} \norm{h}_{W^{2}}^{2},
\end{displaymath}
where $K_{1}$ is a constant depending on $\norm{h_{0}}_{W^{1}}$ (without loss of generality we assume $K_{1} \geq K_{0}$). Then by Gronwall's lemma,
\begin{displaymath}
  \norm{h(t)}_{W^{2}} \leq M_{2}(t) \norm{h_{0}}_{W^{2}},\qquad \forall t \in [0,T],
\end{displaymath}
where
\begin{displaymath}
  M_{2}(t) = \exp \biggl\{ \frac{1}{2}\, K_{1} \int_{0}^{t} \Bigl[ \norm{H\omega(s)}_{L^{\infty}} + M_{1}(s) \Bigr]\,ds \biggr\} \geq M_{1}(t).
\end{displaymath}
Finally, for $2 < k \leq m$, \eqref{eqn_reg_est} becomes
\begin{displaymath}
  \frac{d}{dt} \norm{h}_{W^{k}}^{2} \leq C \Bigl\{ \norm{H\omega}_{L^{\infty}} + \norm{u_{z}}_{L^{\infty}} + \norm{\omega}_{V^{2}} + 1 \Bigr\} \norm{h}_{W^{k}}^{2} \leq K_{k-1} \Bigl\{ \norm{H\omega}_{L^{\infty}} + M_{2} \Bigr\} \norm{h}_{W^{k}}^{2},
\end{displaymath}
where $K_{k-1}$ is a constant depending on $k$ and $\norm{h_{0}}_{W^{2}}$. Gronwall's lemma then implies
\begin{displaymath}
  \norm{h(t)}_{W^{k}} \leq M_{k}(t) \norm{h_{0}}_{W^{k}},\qquad \forall t \in [0,T],
\end{displaymath}
where
\begin{displaymath}
  M_{k}(t) = \exp \biggl\{ \frac{1}{2}\, K_{k-1} \int_{0}^{t} \Bigl[ \norm{H\omega(s)}_{L^{\infty}} + M_{2}(s) \Bigr]\,ds \biggr\}.
\end{displaymath}
This establishes the high-norm estimate \eqref{eqn_reg_hm_m} for $\norm{h}_{W^{m}}$ under the assumption of sufficiently smooth $h$. To prove the low-norm estimate \eqref{eqn_reg_hm_1}, it suffices to note that $\norm{h}_{W^{1}}$ can alternatively be
estimated by
\begin{displaymath}
  \frac{d}{dt} \norm{h}_{W^{1}}^{2} \leq C \Bigl\{ \norm{H\omega}_{L^{\infty}} + \norm{u_{z}}_{L^{\infty}} + 1 \Bigr\} \norm{h}_{W^{1}}^{2} \leq 2C_{0} \Bigl\{ \norm{h}_{W^{1}} + 1 \Bigr\} \norm{h}_{W^{1}}^{2},
\end{displaymath}
where $C_{0}$ is an absolute constant depending only on $L$. The desired estimate then follows from Gronwall's lemma.

To complete the proof of the proposition, we need to rigorously justify the above formal manipulations, in particular \eqref{eqn_reg_est} for $k = m$. We achieve this by regarding $h = (u,\omega)$ as solutions of the \emph{linear} hyperbolic equation
\begin{subequations}\label{eqn_1d_lin}
\begin{equation}
  h_{t} + v h_{z} = f,\qquad f = (0,u_{z}),
  \label{eqn_1d_lin_h}
\end{equation}
with initial data
\begin{gather}
  h_{0} = (u_{0},\omega_{0}) \in W^{m}(S), \label{eqn_1d_lin_ic} \\
  \intertext{and velocity}
  v \in C([0,T]; V^{m+1}(S)) \cap C^{1}([0,T]; V^{m}(S)). \label{eqn_1d_lin_v}
\end{gather}
\end{subequations}
It is a standard result from the theory of linear hyperbolic equations that problem \eqref{eqn_1d_lin} has a unique solution in class $C([0,T]; W^{m}(S)) \cap C^{1}([0,T]; W^{m-1}(S))$, and by approximating the data $h_{0}$ using smooth functions,
\eqref{eqn_reg_est} and the resulting high-norm estimates can be readily established as shown above. The same estimate for $h$ then follows from a density argument.
\end{proof}

Now we are ready to prove Theorem \ref{thm_1d_reg}. Suppose
\begin{displaymath}
  h = (u,\omega) \in C([0,T]; W^{1}(S)) \cap C^{1}([0,T]; W^{0}(S))
\end{displaymath}
is a solution of the 1D model \eqref{eqn_eat_1d} in class $CV^{1}$ on $[0,T]$, with the initial data $h_{0} = (u_{0},\omega_{0}) \in W^{m}(S)$. By Theorem \ref{thm_1d_loc_ex}, there exists another $T_{m} > 0$ that may depend on $\norm{h_{0}}_{W^{m}}^{2}
= \norm{u_{0}}_{V^{m+1}}^{2} + \norm{\omega_{0}}_{V^{m}}^{2}$ such that \eqref{eqn_eat_1d} has a solution in class $CV^{m}$ on $[0,T_{m}]$:
\begin{displaymath}
  \tilde{h} = (\tilde{u},\tilde{\omega}) \in C([0,T_{m}]; W^{m}(S)) \cap C^{1}([0,T_{m}]; W^{m-1}(S)).
\end{displaymath}
By uniqueness $\tilde{h} = h$ on $[0,T_{m}]$, so $h$ belongs to class $CV^{m}$ on $[0,T_{m}]$. Now let $T_{m}^{*} \geq T_{m} > 0$ denote the first time at which $h$ ceases to be a solution in $CV^{m}$. We shall show that $T_{m}^{*} \geq T$. Suppose this
is not the case, i.e. $T_{m}^{*} < T$. Then by Proposition \ref{prop_reg_hm},
\begin{displaymath}
  \norm{h(t)}_{W^{m}} \leq M_{m}(T) \norm{h_{0}}_{W^{m}},\qquad \forall t \in [0,T_{m}^{*}),
\end{displaymath}
where $M_{m}(T)$ is a constant depending only on
\begin{displaymath}
  M_{0}(T) = \exp \biggl\{ \int_{0}^{T} \norm{H\omega(t)}_{L^{\infty}}\,dt \biggr\} \leq \exp \Bigl\{ CT \max_{t \in [0,T]} \norm{h(t)}_{W^{1}} \Bigr\}
\end{displaymath}
and $\norm{h_{0}}_{W^{\min(m,2)}}$. Consequently, $\norm{h(t)}_{W^{m}}$ is uniformly bounded on $[0,T_{m}^{*})$ with a bound depending only on $\norm{h_{0}}_{W^{m}}$ and $\max_{t \in [0,T]} \norm{h(t)}_{W^{1}}$, and by Theorem \ref{thm_1d_loc_ex}, there
exists, for each $t_{0} < T_{m}^{*}$, a $\delta > 0$ independent of $t_{0}$ such that \eqref{eqn_eat_1d} has a solution $\tilde{h}$ in $CV^{m}$ on $[t_{0},t_{0}+\delta]$ with initial data $\tilde{h}(t_{0}) = h(t_{0})$. By uniqueness $\tilde{h}$ and $h$
must coincide on $[t_{0},t_{0}+\delta]$, which then contradicts the assumption that $h$ cannot be continued in class $CV^{m}$ beyond $T_{m}^{*}$. This completes the proof of Theorem \ref{thm_1d_reg}.

\subsection{Proof of Theorem \ref{thm_1d_c_dep}}\label{ssec_wp_c_dep}
We next prove Theorem \ref{thm_1d_c_dep}. The key of the proof is to find, for any pair of solutions $h_{1} = (u_{1},\omega_{1}),\ h_{2} = (u_{2},\omega_{2})$ to \eqref{eqn_eat_1d}, an appropriate bound for the difference $\norm{A(h_{1}) -
A(h_{2})}_{W^{m}}$ where $A$ is the nonlinear operator defined in \eqref{eqn_1d_a}. Since this bound generally involves higher norms $\norm{h_{1}}_{W^{m+1}},\ \norm{h_{2}}_{W^{m+1}}$ of the solution, it is necessary to introduce some form of
regularizations so that the desired estimates carry through.

To this end, we follow the idea of \citet{kl1984} and introduce the family of smoothing operators
\begin{equation}
  J_{\epsilon} f = \eta_{\epsilon} * f,\qquad f \in V^{0}(S),\ \epsilon \in (0,1],
  \label{eqn_c_dep_m}
\end{equation}
where $\eta_{\epsilon}$ is a (smooth) approximation to identity on $V^{0}(S)$ (e.g. the Poisson kernel or, for $\epsilon = 1/n,\ n \in \mathbb{N}$, the Fej\'{e}r kernel), and
\begin{displaymath}
  f*g(z) = \frac{1}{L} \int_{0}^{L} f(z-y) g(y)\,dy = \frac{1}{L} \int_{0}^{L} g(z-y) f(y)\,dy
\end{displaymath}
denotes the convolution of two $L$-periodic functions $f$ and $g$. The following properties of $J_{\epsilon}$ are well known \citep[see, for example,][]{mb2002}:

\begin{prop}
Let $J_{\epsilon}$ be the smoothing operator (mollifier) defined in \eqref{eqn_c_dep_m}. Then for any $m,\, k \geq 0$ and $f \in V^{m}(S)$:
\begin{enumerate}
  \item $J_{\epsilon} f \in V^{m+k}(S)$;
  \item there exists constant $C$ depending only on $m,\ k$ and $L$ such that $\norm{J_{\epsilon} f}_{V^{m+k}} \leq C \epsilon^{-k} \norm{f}_{V^{m}}$;
  \item $\norm{J_{\epsilon} f - f}_{V^{m}} \to 0$ and, if $m \geq 1$, $\epsilon^{-1} \norm{J_{\epsilon} f - f}_{V^{m-1}} \to 0$ as $\epsilon \to 0^{+}$.
\end{enumerate}
\label{prop_c_dep_m}
\end{prop}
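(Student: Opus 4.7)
The plan is to analyze $J_\epsilon$ via Fourier series on $S$. Every $f \in V^m(S)$ admits an expansion $f(z) = \sum_{n \neq 0} \hat f(n) e^{2\pi i n z/L}$ (with $\hat f(0)=0$ by the zero-mean condition), and $\|f\|_{V^m}^2$ is equivalent to $\sum_{n\neq 0}(2\pi n/L)^{2m}|\hat f(n)|^2$. Convolution becomes multiplication in Fourier, so $\widehat{J_\epsilon f}(n) = \hat\eta_\epsilon(n)\hat f(n)$. Since $\hat\eta_\epsilon(0) = 1$, $J_\epsilon$ preserves the zero-mean property, and since $\eta_\epsilon$ is a non-negative probability kernel, $|\hat\eta_\epsilon(n)| \le 1$ for all $n$.

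For (a) and (b), the key input is a frequency-concentration bound on $\eta_\epsilon$. For the Fej\'er kernel with $\epsilon = 1/N$, $\hat\eta_\epsilon(n)$ is supported on $|n|\le N$; for the Poisson kernel, $|\hat\eta_\epsilon(n)| = r^{|n|}$ with $r$ comparable to $e^{-c\epsilon}$. Either case supplies a uniform estimate
\[
 |n|^{k}|\hat\eta_\epsilon(n)| \le C_k\,\epsilon^{-k}, \qquad n\in\mathbb{Z},
\]
coming either from the compact support (Fej\'er) or from maximizing $x^k e^{-cx}$ (Poisson). Combined with $|\hat\eta_\epsilon(n)|\le 1$ this yields
\[
\|J_\epsilon f\|_{V^{m+k}}^{2} = \sum_{n\neq 0}(2\pi n/L)^{2(m+k)}|\hat\eta_\epsilon(n)|^{2}|\hat f(n)|^{2} \le C\epsilon^{-2k}\|f\|_{V^m}^{2},
\]
giving both $J_\epsilon f \in V^{m+k}(S)$ and the announced norm bound.

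For (c), I would apply dominated convergence term-by-term in the Fourier expansion of $\|J_\epsilon f - f\|_{V^m}^{2} = \sum(2\pi n/L)^{2m}|1-\hat\eta_\epsilon(n)|^{2}|\hat f(n)|^{2}$. Each summand tends to $0$ as $\epsilon\to 0^+$, because $\hat\eta_\epsilon(n)\to 1$ pointwise, and is dominated by $4(2\pi n/L)^{2m}|\hat f(n)|^{2}$, which is summable by hypothesis. For the sharper rate with $m \ge 1$, use the (even) symmetry of the kernel to get the uniform Lipschitz bound $|1-\hat\eta_\epsilon(n)| \le C\epsilon|n|$, so that
\[
\epsilon^{-2}(2\pi n/L)^{2(m-1)}|1-\hat\eta_\epsilon(n)|^{2}|\hat f(n)|^{2} \le C(2\pi n/L)^{2m}|\hat f(n)|^{2},
\]
a summable majorant, while the first-order vanishing of $1-\hat\eta_\epsilon$ at $n=0$ gives $\epsilon^{-1}|1-\hat\eta_\epsilon(n)| \to 0$ pointwise in $n$; dominated convergence finishes the job.

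The main technical point is the rate statement in (c), which depends on the regularity of $\hat\eta_\epsilon$ near the origin (i.e.\ on moment conditions for $\eta$): one must know not just $\hat\eta_\epsilon(0)=1$ but also that the linear term vanishes. The rest—zero-mean preservation, smoothing, and $L^2$ convergence—is a routine consequence of Parseval's identity and dominated convergence, and matches the presentation in \citet{mb2002}.
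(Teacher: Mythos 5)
The paper does not actually prove this proposition (it is stated as well known, with a citation to \citet{mb2002}), so your proof stands on its own; your Fourier-multiplier framework is the standard and correct one for parts (a), (b), and the first statement of (c): $\hat\eta_\epsilon(0)=1$ preserves the zero mean, the bounds $\abs{\hat\eta_\epsilon(n)}\le 1$ and $\abs{n}^k\abs{\hat\eta_\epsilon(n)}\le C_k\epsilon^{-k}$ give the smoothing estimate via Parseval, and dominated convergence gives $\norm{J_\epsilon f-f}_{V^m}\to 0$.

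The gap is in the rate statement of (c), which you yourself single out as the main technical point and which is precisely what the paper uses later (the step $\epsilon^{-1}\norm{h_0^\epsilon-h_0}_{W^2}\to 0$ in the proof of Proposition \ref{prop_c_dep_conv}). For the two kernels you invoke, the claimed pointwise limit $\epsilon^{-1}\abs{1-\hat\eta_\epsilon(n)}\to 0$ is false: for the Fej\'er kernel with $\epsilon=1/N$ one has $1-\hat\eta_\epsilon(n)=\abs{n}/(N+1)$ for $\abs{n}\le N$, so $\epsilon^{-1}\bigl(1-\hat\eta_\epsilon(n)\bigr)\to\abs{n}$, and for the Poisson kernel with $r=\re^{-c\epsilon}$ one has $\epsilon^{-1}\bigl(1-r^{\abs{n}}\bigr)\to c\abs{n}$. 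Feeding these limits into the very dominated-convergence computation you set up shows that $\epsilon^{-1}\norm{J_\epsilon f-f}_{V^{m-1}}$ tends to a nonzero quantity comparable to $\norm{f}_{V^m}$, not to zero; with these kernels one only obtains the weaker bound $\norm{J_\epsilon f-f}_{V^{m-1}}\le C\epsilon\norm{f}_{V^m}$. Evenness of the kernel is not the right hypothesis here -- Poisson and Fej\'er are both even -- what is needed is $1-\hat\eta_\epsilon(n)=o(\epsilon)$ for each fixed $n$. This holds, for instance, for the periodization of a scaled mollifier $\eta_\epsilon=\epsilon^{-1}\eta(\cdot/\epsilon)$ with $\eta$ smooth, even, of unit mass and finite second moment, since then $1-\hat\eta_\epsilon(n)=O(\epsilon^2 n^2)$ (this is the setting of \citet{mb2002} that the paper cites), or for a de la Vall\'ee Poussin--type kernel whose multiplier equals $1$ on $\abs{n}\le N$. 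With such a choice your dominated-convergence argument for the second half of (c) goes through verbatim; with the Fej\'er or Poisson kernels as you have parametrized them, it does not, so either the kernel must be changed or the statement weakened to $O(\epsilon)$ (which would then be insufficient for the later proof as written).
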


Returning to problem \eqref{eqn_eat_1d}, for any given data $h_{0} = (u_{0},\omega_{0}) \in W^{m}(S)$ with $m \geq 3$, we consider the smoothed problem with $h_{0}$ replaced by $h_{0}^{\epsilon} = J_{\epsilon} h_{0}$ and denote the corresponding
solutions by $h^{\epsilon} = (u^{\epsilon},\omega^{\epsilon})$. Similarly, for any given data $h_{0,j} = (u_{0,j},\omega_{0,j}) \in W^{m}(S)$, we consider the smoothed problem with $h_{0,j}$ replaced by $h_{0,j}^{\epsilon} = J_{\epsilon} h_{0,j}$ and
denote the corresponding solutions by $h_{j}^{\epsilon} = (u_{j}^{\epsilon},\omega_{j}^{\epsilon})$. Since, by Proposition \ref{prop_c_dep_m}, the set of data $\{h_{0}^{\epsilon}, h_{0,j}^{\epsilon}\}$ is uniformly bounded in $W^{m}(S)$ for all $\epsilon
\in (0,1]$ and $j \in \mathbb{N}$, it follows from Theorem \ref{thm_1d_loc_ex} that there exists a $T' > 0$ independent of $\epsilon$ and $j$ such that the solutions $\{h^{\epsilon}, h_{j}^{\epsilon}\}$ exist in class $CV^{m}$ on $[0,T']$. In view of
Proposition \ref{prop_reg_hm}, we may also choose $T'$ sufficiently small so that $\{h^{\epsilon}, h_{j}^{\epsilon}\}$ are uniformly bounded in $W^{m}(S)$ on $[0,T']$. Since each $h_{0}^{\epsilon}$ and $h_{0,j}^{\epsilon}$ belongs to $W^{m+1}(S)$,
Theorem \ref{thm_1d_reg} shows that $h^{\epsilon}$ and $h_{j}^{\epsilon}$ indeed belongs to class $CV^{m+1}$.

The key step in the proof of Theorem \ref{thm_1d_c_dep} is the following proposition, which establishes the uniform convergence of the smoothed solutions $h^{\epsilon}$ to $h$, and similarly the uniform convergence of $h_{j}^{\epsilon}$ to $h_{j}$, in
$W^{m}(S)$ as $\epsilon \to 0^{+}$.
\begin{prop}
Let $m \geq 3$ be a positive integer and let
\begin{displaymath}
  h = (u,\omega) \in C([0,T]; W^{m}(S)) \cap C^{1}([0,T]; W^{m-1}(S))
\end{displaymath}
be a solution of \eqref{eqn_eat_1d} in class $CV^{m}$ on $[0,T]$, with the initial data $h_{0} \in W^{m}(S)$. Let $h^{\epsilon}$ be the solution of the smoothed problem with initial data $h_{0}^{\epsilon} = J_{\epsilon} h_{0}$ where $J_{\epsilon}$ is the
smoothing operator defined in \eqref{eqn_c_dep_m}, and let $T' > 0$ be the common existence time of the solutions $\{h, h^{\epsilon}\}$ chosen as above. Then
\begin{displaymath}
  \max_{t \in [0,T']} \norm{h^{\epsilon}(t) - h(t)}_{W^{m}} \to 0\qquad \text{as}\qquad \epsilon \to 0^{+},
\end{displaymath}
and the convergence is uniform when $h_{0}$ varies over compact subsets of $W^{m}(S)$. \label{prop_c_dep_conv}
\end{prop}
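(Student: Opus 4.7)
The plan is to adapt the Bona--Smith regularization strategy: compare two mollified solutions $h^\epsilon$ and $h^{\epsilon'}$ at the $W^m$ level, exploiting the extra regularity that smooth initial data confers, and then identify the common limit as $h$ through uniqueness at a lower norm. By Theorems \ref{thm_1d_loc_ex}--\ref{thm_1d_reg}, Proposition \ref{prop_reg_hm}, and Proposition \ref{prop_c_dep_m}(ii), both $h^\epsilon$ and $h^{\epsilon'}$ belong to class $CV^{m+1}$ on $[0,T']$ with uniform $W^m$-bounds and the quantitative derivative loss $\|h^\epsilon\|_{W^{m+1}} \leq C/\epsilon$.

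The first step is a low-norm energy estimate. Setting $\tilde h = h^\epsilon - h^{\epsilon'}$ and $\tilde v = v^\epsilon - v^{\epsilon'}$, the difference satisfies
\begin{displaymath}
\tilde u_t + v^\epsilon \tilde u_z = -\tilde v u^{\epsilon'}_z, \qquad \tilde\omega_t + v^\epsilon \tilde\omega_z = \tilde u_z - \tilde v \omega^{\epsilon'}_z.
\end{displaymath}
Testing against $\tilde h$ in the $W^{m-1}$-inner product and applying Lemma \ref{lmm_cal_ineq_2} with $k = m-1$ (all Kronecker exclusions are inactive because $m \geq 3$), each resulting contribution is bounded by $\|\tilde h\|_{W^{m-1}}^2$ times uniform $W^m$-bounds. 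Gronwall then yields $\max_{t \in [0,T']} \|\tilde h(t)\|_{W^{m-1}} \leq C \|h_0^\epsilon - h_0^{\epsilon'}\|_{W^{m-1}}$, which tends to zero at rate $o(\max(\epsilon,\epsilon'))$ via Proposition \ref{prop_c_dep_m}(iii) applied to $h_0 \in V^m$.

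The second and more delicate step is the high-norm energy estimate. At level $W^m$, applying Lemma \ref{lmm_cal_ineq_2} with $k = m+1$ on the $u$-equation and $k = m$ on the $\omega$-equation collapses most contributions into $C\|\tilde h\|_{W^m}^2$, leaving one troublesome contribution of the shape
\begin{displaymath}
C \|\tilde h\|_{W^m} \cdot \|\tilde v\|_{L^\infty} \cdot \|h^{\epsilon'}\|_{W^{m+1}}
\end{displaymath}
(and its $\omega$-analogue), coming from the Leibniz branch in which every derivative falls on the $\epsilon'$-component. Sobolev and Poincar\'e give $\|\tilde v\|_{L^\infty} \leq C\|H\tilde\omega\|_{V^0} = C\|\tilde\omega\|_{V^0} \leq C\|\tilde h\|_{W^{m-1}}$, which the first step identifies as $o(\max(\epsilon,\epsilon'))$, while $\|h^{\epsilon'}\|_{W^{m+1}} \leq C(\epsilon')^{-1}$. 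Taking $\epsilon' = \epsilon/2$ keeps the ratio bounded and forces the offending product to be $o(1)$ as $\epsilon \to 0^+$, so that
\begin{displaymath}
\tfrac{d}{dt}\|\tilde h\|_{W^m}^2 \leq C\|\tilde h\|_{W^m}^2 + o(1)\|\tilde h\|_{W^m},
\end{displaymath}
and together with $\|h_0^\epsilon - h_0^{\epsilon/2}\|_{W^m} \to 0$ from Proposition \ref{prop_c_dep_m}(iii), Gronwall produces $\max_{[0,T']} \|h^\epsilon - h^{\epsilon/2}\|_{W^m} \to 0$. Any weak-$\ast$ limit of $\{h^\epsilon\}$ in $L^\infty(0,T'; W^m)$ must coincide with $h$ by the uniqueness half of Theorem \ref{thm_1d_loc_ex} applied at the $W^{m-1}$ level, so a standard telescoping/diagonal argument upgrades the preceding bound to $h^\epsilon \to h$ strongly in $C([0,T']; W^m(S))$. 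Uniformity over compact subsets of $W^m(S)$ is automatic because every constant above depends only continuously on $\|h_0\|_{W^m}$.

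The main obstacle is the fine balance in the high-norm step: the low-norm rate $o(\epsilon)$ from Proposition \ref{prop_c_dep_m}(iii) must exactly absorb the $(\epsilon')^{-1}$ derivative loss of the mollified solution in $W^{m+1}$. This balance is what forces $m \geq 3$, since it requires the low-norm convergence to sit one full level below $W^m$ at a norm where Lemma \ref{lmm_cal_ineq_2} closes without Kronecker exclusions and where $\|\tilde v\|_{L^\infty}$ is controlled through Sobolev/Poincar\'e.
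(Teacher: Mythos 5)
Your overall strategy is the same Bona--Smith-type regularization the paper uses: uniform $W^{m}$ bounds for the mollified solutions on a common interval $[0,T']$, a low-norm energy estimate for differences, a high-norm estimate in which the $\epsilon^{-1}$ derivative loss of $\norm{h^{\epsilon}}_{W^{m+1}}$ is absorbed by the $o(\epsilon)$ low-norm convergence of the mollified data, and identification of the limit with $h$ by uniqueness. However, there is a genuine gap in how you close the argument. Because you wrote the difference equations with the splitting $\tilde u_t + v^{\epsilon}\tilde u_z = -\tilde v\, u^{\epsilon'}_z$, the dangerous term carries $\norm{h^{\epsilon'}}_{W^{m+1}} \lesssim (\epsilon')^{-1}$, i.e.\ the loss sits on the \emph{smaller} parameter, and to balance it against the low-norm rate $o(\max(\epsilon,\epsilon'))$ you were forced to restrict to $\epsilon' = \epsilon/2$. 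But proving only $\max_{[0,T']}\norm{h^{\epsilon}-h^{\epsilon/2}}_{W^{m}} \to 0$ does not make $\{h^{\epsilon}\}$ Cauchy in $C([0,T'];W^{m})$: consecutive dyadic differences tending to zero give nothing without a summable rate, and Proposition \ref{prop_c_dep_m} provides no rate for $\norm{J_{\epsilon}h_0 - h_0}_{W^{m}}$ for general $h_0 \in W^{m}(S)$. Likewise, the weak-$*$ limit argument only identifies weak limits with $h$; combined with the low-norm convergence it yields $h^{\epsilon}(t) \rightharpoonup h(t)$ in $W^{m}$, not the strong convergence claimed, so the ``telescoping/diagonal'' upgrade does not exist as stated.

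The repair is exactly the paper's arrangement: compare \emph{arbitrary} pairs $0 < \delta < \epsilon \leq 1$ and split the nonlinear difference so that the $W^{m+1}$-norm falls on the solution with the \emph{larger} mollification parameter, e.g.\ $v^{\epsilon}u^{\epsilon}_z - v^{\delta}u^{\delta}_z = \tilde v\, u^{\epsilon}_z + v^{\delta}\tilde u_z$. Then the offending product is controlled by
\begin{displaymath}
  \norm{h^{\epsilon}}_{W^{m+1}} \norm{\tilde h}_{W^{2}} \lesssim \epsilon^{-1}\Bigl( \norm{h_{0}^{\epsilon}-h_{0}}_{W^{2}} + \norm{h_{0}^{\delta}-h_{0}}_{W^{2}} \Bigr)
  \leq \epsilon^{-1}\norm{h_{0}^{\epsilon}-h_{0}}_{W^{2}} + \delta^{-1}\norm{h_{0}^{\delta}-h_{0}}_{W^{2}} \to 0
\end{displaymath}
uniformly over $\delta < \epsilon$, using $\delta < \epsilon$ and Proposition \ref{prop_c_dep_m}(c) (which, since $m \geq 3$, applies at the $W^{2}$ level, the role your $W^{m-1}$ estimate plays in your write-up). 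With this, Gronwall gives the genuine Cauchy property of $\{h^{\epsilon}\}$ in $C([0,T'];W^{m}(S))$ (and of $\{h^{\epsilon}_t\}$ in $C([0,T'];W^{m-1}(S))$ via the Lipschitz bound on $A$), completeness identifies a strong limit which solves the equation, and uniqueness forces it to equal $h$. Finally, uniformity over compact subsets of $W^{m}(S)$ is not automatic from continuity of the constants alone; it comes from the fact that $J_{\epsilon}h_{0} \to h_{0}$ (with the $\epsilon^{-1}$-weighted low-norm convergence) holds uniformly when $h_{0}$ ranges over a compact subset of $W^{m}(S)$, which is the point the paper invokes.
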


\begin{proof}
For any $0 < \delta < \epsilon \leq 1$ and $2 \leq k \leq m$, we subtract the two equations satisfied by $h^{\epsilon},\ h^{\delta}$, take the $W^{k}$-inner product of the resulting equation with $\tilde{h} = (\tilde{u},\tilde{\omega}) =
(u^{\epsilon}-u^{\delta},\omega^{\epsilon}-\omega^{\delta})$, and use the observation that $\tilde{h} \in C^{1}([0,T']; W^{k})$ to obtain
\begin{displaymath}
  \frac{1}{2} \frac{d}{dt} \norm{\tilde{h}}_{W^{k}}^{2} = -(\tilde{h}, A(h^{\epsilon})-A(h^{\delta}))_{W^{k}} = I_{11} + I_{12} + I_{21} + I_{22} + I_{23},
\end{displaymath}
where
\begin{align*}
  I_{11} + I_{12} & = -(\tilde{u}, v^{\epsilon} u_{z}^{\epsilon} - v^{\delta} u_{z}^{\delta})_{V^{k+1}} = -(\tilde{u}, \tilde{v} u_{z}^{\epsilon})_{V^{k+1}} - (\tilde{u}, v^{\delta} \tilde{u}_{z})_{V^{k+1}},& \tilde{v} & = v^{\epsilon}-v^{\delta}, \\
  I_{21} + I_{22} & = -(\tilde{\omega}, v^{\epsilon} \omega_{z}^{\epsilon} - v^{\delta} \omega_{z}^{\delta})_{V^{k}} = -(\tilde{\omega}, \tilde{v} \omega_{z}^{\epsilon})_{V^{k}} - (\tilde{\omega}, v^{\delta} \tilde{\omega}_{z})_{V^{k}},& I_{23} & = (\tilde{\omega}, \tilde{u}_{z})_{V^{k}}.
\end{align*}
Since $h^{\epsilon},\, h^{\delta} \in W^{m+1}(S) \subset W^{k+1}(S)$, Lemma \ref{lmm_cal_ineq_2} applies with $k \gets k+1$, yielding
\begin{align*}
  \abs{I_{11}} & \leq C \norm{\tilde{u}}_{V^{k+1}} \Bigl\{ \norm{\tilde{\omega}}_{V^{0}} \norm{u^{\epsilon}}_{V^{k+2}} + \norm{u^{\epsilon}}_{V^{2}} \norm{\tilde{\omega}}_{V^{k}} \Bigr\}, \\
  \abs{I_{12}} & \leq C \norm{\tilde{u}}_{V^{k+1}} \Bigl\{ \norm{\omega^{\delta}}_{V^{1}} \norm{\tilde{u}}_{V^{k+1}} + \norm{\tilde{u}}_{V^{2}} \norm{\omega^{\delta}}_{V^{k}} \Bigr\}, \\
  \abs{I_{21}} & \leq C \norm{\tilde{\omega}}_{V^{k}} \Bigl\{ \norm{\tilde{\omega}}_{V^{0}} \norm{\omega^{\epsilon}}_{V^{k+1}} + \norm{\omega^{\epsilon}}_{V^{2}} \norm{\tilde{\omega}}_{V^{k-1}} \Bigr\}, \\
  \abs{I_{22}} & \leq C \norm{\tilde{\omega}}_{V^{k}} \Bigl\{ \norm{\omega^{\delta}}_{V^{1}} \norm{\tilde{\omega}}_{V^{k}} + \norm{\tilde{\omega}}_{V^{2}} \norm{\omega^{\delta}}_{V^{k-1}} \Bigr\}.
\end{align*}
Summing up these estimates and invoking the Cauchy-Schwarz inequality
\begin{displaymath}
  \abs{I_{23}} \leq \norm{\tilde{\omega}}_{V^{k}} \norm{\tilde{u}}_{V^{k+1}},
\end{displaymath}
we deduce
\begin{equation}
  \frac{d}{dt} \norm{\tilde{h}}_{W^{k}}^{2} \leq C \Bigl\{ \norm{h^{\epsilon}}_{W^{2}} + \norm{h^{\delta}}_{W^{1}} + 1 \Bigr\} \norm{\tilde{h}}_{W^{k}}^{2}
  + C \Bigl\{ \norm{h^{\epsilon}}_{W^{k+1}} + \norm{h^{\delta}}_{W^{k}} \Bigr\} \norm{\tilde{h}}_{W^{2}} \norm{\tilde{h}}_{W^{k}}.
  \label{eqn_c_dep_est}
\end{equation}

We shall now derive an estimate for $\norm{\tilde{h}}_{W^{m}}$ and use the result to show that $\{h^{\epsilon}\}$ is Cauchy in
\begin{displaymath}
  X := C([0,T']; W^{m}(S)) \cap C^{1}([0,T']; W^{m-1}(S)).
\end{displaymath}
To begin with, we set $k = 2$ in \eqref{eqn_c_dep_est} to obtain
\begin{displaymath}
  \frac{d}{dt} \norm{\tilde{h}}_{W^{2}}^{2} \leq C \Bigl\{ \norm{h^{\epsilon}}_{W^{3}} + \norm{h^{\delta}}_{W^{2}} + 1 \Bigr\} \norm{\tilde{h}}_{W^{2}}^{2}.
\end{displaymath}
Since $\{h_{0}^{\epsilon}\}$ is uniformly bounded in $W^{m}(S)$ for all $\epsilon \in (0,1]$:
\begin{displaymath}
  \sup_{\epsilon \in (0,1]} \norm{h_{0}^{\epsilon}}_{W^{m}} \leq K,
\end{displaymath}
and $T'$ is chosen sufficiently small, there exists, by Proposition \ref{prop_reg_hm}, a constant $K_{1}$ depending on $m,\ K$ and $T'$ such that
\begin{displaymath}
  \sup_{\epsilon \in (0,1]} \max_{t \in [0,T']} \norm{h^{\epsilon}(t)}_{W^{m}} \leq C \sup_{\epsilon \in (0,1]} \norm{h_{0}^{\epsilon}}_{W^{m}} \leq K_{1}.
\end{displaymath}
Using Gronwall's lemma and noting that $m \geq 3$, we then deduce
\begin{displaymath}
  \max_{t \in [0,T']} \norm{\tilde{h}(t)}_{W^{2}} \leq \re^{CK_{1} T'} \norm{\tilde{h}(0)}_{W^{2}}.
\end{displaymath}
Now setting $k = m$ in \eqref{eqn_c_dep_est} we obtain
\begin{displaymath}
  \frac{d}{dt} \norm{\tilde{h}}_{W^{m}} \leq C \Bigl\{ \norm{h^{\epsilon}}_{W^{2}} + \norm{h^{\delta}}_{W^{1}} + 1 \Bigr\} \norm{\tilde{h}}_{W^{m}}
  + C \Bigl\{ \norm{h^{\epsilon}}_{W^{m+1}} + \norm{h^{\delta}}_{W^{m}} \Bigr\} \norm{\tilde{h}}_{W^{2}}.
\end{displaymath}
By Proposition \ref{prop_reg_hm} and Proposition \ref{prop_c_dep_m}, there holds
\begin{displaymath}
  \sup_{\epsilon \in (0,1]} \max_{t \in [0,T']} \epsilon \norm{h^{\epsilon}(t)}_{W^{m+1}} \leq C \sup_{\epsilon \in (0,1]} \epsilon \norm{h_{0}^{\epsilon}}_{W^{m+1}} \leq K_{2},
\end{displaymath}
where $K_{2}$ is another constant depending on $m,\ K$ and $T'$, and (recall that $\delta < \epsilon$)
\begin{align*}
  \max_{t \in [0,T']} \epsilon^{-1} \norm{\tilde{h}(t)}_{W^{2}} & \leq \re^{CK_{1} T'} \epsilon^{-1} \norm{\tilde{h}(0)}_{W^{2}} \\
  & \leq \re^{CK_{1} T'} \Bigl\{ \epsilon^{-1} \norm{h_{0}^{\epsilon} - h_{0}}_{W^{2}} + \delta^{-1} \norm{h_{0}^{\delta} - h_{0}}_{W^{2}} \Bigr\} \to 0\qquad \text{as}\qquad \epsilon \to 0^{+}.
\end{align*}
Hence Gronwall's lemma implies that
\begin{displaymath}
  \max_{t \in [0,T']} \norm{\tilde{h}(t)}_{W^{m}} \leq \re^{CK_{1} T'} \biggl\{ \norm{\tilde{h}(0)}_{W^{m}} + CK_{2} \epsilon^{-1} \int_{0}^{T'} \norm{\tilde{h}(s)}_{W^{2}}\,ds \biggr\} \to 0\qquad \text{as}\qquad \epsilon \to 0^{+},
\end{displaymath}
which shows that $\{h^{\epsilon}\}$ is uniformly Cauchy in $W^{m}(S)$. To see $\{h_{t}^{\epsilon}\}$ is also uniformly Cauchy in $W^{m-1}(S)$, it suffices to recall from Proposition \ref{prop_cal_ineq_1} that $A(h)$ is strongly continuous from $W^{m}(S)$
to $W^{m-1}(S)$, i.e.
\begin{align*}
  \norm{\tilde{h}_{t}}_{W^{m-1}} & = \norm{A(h^{\epsilon}) - A(h^{\delta})}_{W^{m-1}} \\
  & \leq C \Bigl\{ \norm{h^{\epsilon}}_{W^{m}} + \norm{h^{\delta}}_{W^{m}} + 1 \Bigr\} \norm{\tilde{h}}_{W^{m}} \leq CK_{1} \norm{\tilde{h}}_{W^{m}}.
\end{align*}
Hence $\{h^{\epsilon}\}$ is Cauchy in
\begin{displaymath}
  X = C([0,T']; W^{m}(S)) \cap C^{1}([0,T']; W^{m-1}(S)),
\end{displaymath}
as claimed.

Since $X$ is complete (with the obvious choice of the norm), there exists a unique $\hat{h} \in X$ such that
\begin{displaymath}
  \max_{t \in [0,T']} \Bigl\{ \norm{h^{\epsilon}(t) - \hat{h}(t)}_{W^{m}} + \norm{h_{t}^{\epsilon}(t) - \hat{h}_{t}(t)}_{W^{m-1}} \Bigr\} \to 0\qquad \text{as}\qquad \epsilon \to 0^{+}.
\end{displaymath}
Since
\begin{align*}
  \norm{\hat{h}_{t} + A(\hat{h})}_{W^{m-1}} & = \limsup_{\epsilon \to 0^{+}} \norm{\hat{h}_{t} + A(\hat{h}) - h_{t}^{\epsilon} - A(h^{\epsilon})}_{W^{m-1}} \\
  & \leq \limsup_{\epsilon \to 0^{+}} \Bigl\{ \norm{\hat{h}_{t} - h_{t}^{\epsilon}}_{W^{m-1}} + CK_{1} \norm{\hat{h} - h^{\epsilon}}_{W^{m}} \Bigr\} = 0,
\end{align*}
$\hat{h}$ is also a solution of \eqref{eqn_eat_1d} in class $CV^{m}$. But by uniqueness, $\hat{h}$ must coincide with $h$ on $[0,T']$, so
\begin{displaymath}
  \max_{t \in [0,T']} \Bigl\{ \norm{h^{\epsilon}(t) - h(t)}_{W^{m}} + \norm{h_{t}^{\epsilon}(t) - h_{t}(t)}_{W^{m-1}} \Bigr\} \to 0\qquad \text{as}\qquad \epsilon \to 0^{+}.
\end{displaymath}
In addition, this convergence is uniform when $h_{0}$ varies over compact subsets of $W^{m}(S)$, since the convergence of $h_{0}^{\epsilon} = J_{\epsilon} h_{0}$ to $h_{0}$ is uniform over compact subsets of $W^{m}(S)$. Hence the proposition follows.
\end{proof}

Now we are ready to prove Theorem \ref{thm_1d_c_dep}. Since $\{h_{0}^{\epsilon}, h_{0,j}^{\epsilon}\}$ is compact in $W^{m}(S)$, for any given $\delta > 0$ there exists, by Proposition \ref{prop_c_dep_conv}, an $\epsilon \in (0,1]$ such that
\begin{displaymath}
  \max_{t \in [0,T']} \norm{h^{\epsilon}(t) - h(t)}_{W^{m}} < \frac{\delta}{3},\qquad \sup_{j \in \mathbb{N}} \max_{t \in [0,T']} \norm{h_{j}^{\epsilon}(t) - h_{j}(t)}_{W^{m}} < \frac{\delta}{3}.
\end{displaymath}
For this fixed $\epsilon$, a computation similar to the one leading to \eqref{eqn_c_dep_est} shows that
\begin{displaymath}
  \frac{d}{dt} \norm{\tilde{h}}_{W^{m}}^{2} \leq C \Bigl\{ \norm{h_{j}^{\epsilon}}_{W^{m+1}} + \norm{h^{\epsilon}}_{W^{m}} + 1 \Bigr\} \norm{\tilde{h}}_{W^{m}}^{2} \leq CK_{2} \epsilon^{-1} \norm{\tilde{h}}_{W^{m}}^{2},
\end{displaymath}
where $\tilde{h} = h_{j}^{\epsilon} - h^{\epsilon} \in W^{m+1}(S)$. Gronwall's lemma then implies that
\begin{displaymath}
  \max_{t \in [0,T']} \norm{\tilde{h}(t)}_{W^{m}} \leq \re^{CK_{2} \epsilon^{-1} T'} \norm{\tilde{h}(0)}_{W^{m}} < \frac{\delta}{3},
\end{displaymath}
provided that $j > j_{0}$ is sufficiently large. Hence
\begin{align*}
  \max_{t \in [0,T']} \norm{h_{j}(t) - h(t)}_{W^{m}} & \leq \max_{t \in [0,T']} \Bigl\{ \norm{h_{j}(t) - h_{j}^{\epsilon}(t)}_{W^{m}} \\
  &\qquad{}+ \norm{h_{j}^{\epsilon}(t) - h^{\epsilon}(t)}_{W^{m}} + \norm{h^{\epsilon}(t) - h(t)}_{W^{m}} \Bigr\} < \delta,\qquad \forall j > j_{0},
\end{align*}
which shows that
\begin{displaymath}
  \max_{t \in [0,T']} \norm{h_{j}(t) - h(t)}_{W^{m}} \to 0\qquad \text{as}\qquad j \to \infty.
\end{displaymath}
This completes the proof of Theorem \ref{thm_1d_c_dep}.

\subsection{Proof of Theorem \ref{thm_1d_bkm}}\label{ssec_wp_bkm}
Finally, we give the proof of Theorem \ref{thm_1d_bkm}. Suppose first that \eqref{eqn_1d_bkm} holds, i.e.
\begin{displaymath}
  \int_{0}^{T} \norm{H\omega(t)}_{L^{\infty}}\,dt = \infty,
\end{displaymath}
then necessarily
\begin{displaymath}
  \limsup_{t \to T^{-}} \norm{H\omega(t)}_{L^{\infty}} = \infty.
\end{displaymath}
But by Sobolev's imbedding theorem and Poincar\'{e}'s inequality,
\begin{displaymath}
  \norm{H\omega(t)}_{L^{\infty}} \leq C \norm{\omega(t)}_{V^{1}} \leq C \norm{\omega(t)}_{V^{m}},
\end{displaymath}
so
\begin{displaymath}
  \limsup_{t \to T^{-}} \norm{\omega(t)}_{V^{m}} = \infty.
\end{displaymath}
This shows that the solution cannot be continued in class $CV^{m}$ up to $t = T$.

Next, suppose that \eqref{eqn_1d_bkm} does not hold, i.e.
\begin{equation}
  \int_{0}^{T} \norm{H\omega(t)}_{L^{\infty}}\,dt < \infty.
  \label{eqn_1d_bkm_n}
\end{equation}
Then by Proposition \ref{prop_reg_hm},
\begin{displaymath}
  \norm{h(t)}_{W^{m}} \leq M_{m}(T) \norm{h_{0}}_{W^{m}},\qquad \forall t \in [0,T),\ h = (u,\omega),
\end{displaymath}
where $M_{m}(T)$ is a constant depending only on
\begin{displaymath}
  M_{0}(T) = \exp \biggl\{ \int_{0}^{T} \norm{H\omega(t)}_{L^{\infty}}\,dt \biggr\} < \infty
\end{displaymath}
and $\norm{h_{0}}_{W^{\min(m,2)}}$. Consequently, $\norm{h(t)}_{W^{m}}$ is uniformly bounded on $[0,T)$ with a bound depending only on $\norm{h_{0}}_{W^{m}}$ and $M_{0}(T)$, and by Theorem \ref{thm_1d_loc_ex}, there exists, for each $t_{0} < T$, a
$\delta
> 0$ independent of $t_{0}$ such that \eqref{eqn_eat_1d} has a solution $\tilde{h}$ in $CV^{m}$ on $[t_{0},t_{0}+\delta]$ with initial data $\tilde{h}(t_{0}) = h(t_{0})$. By uniqueness $\tilde{h}$ and $h$ must coincide on $[t_{0},t_{0}+\delta]$, which
then shows that $h$ can be continued in class $CV^{m}$ to $t = T+\frac{1}{2} \delta$. This completes the proof of Theorem \ref{thm_1d_bkm}.

\section*{Acknowledgement}
This work was supported in part by NSF FRG Grant DMS-1159138.


\begin{thebibliography}{99}

\bibitem[Beale et~al.(1984)Beale, Kato, and Majda]{bkm1984} J. T. Beale, T. Kato, and A. Majda, ``Remarks on the breakdown of smooth solutions for the 3-D Euler equations'', Commun. Math. Phys. 94, 61--66 (1984).

\bibitem[Castro and C\'{o}rdoba(2008)Castro and C\'{o}rdoba]{cc2008} A. Castro and D. C\'{o}rdoba, ``Global existence, singularities and ill-posedness for a nonlocal flux'', Adv. Math. 219, 1916--1936 (2008).

\bibitem[Castro and C\'{o}rdoba(2010)Castro and C\'{o}rdoba]{cc2010} A. Castro and D. C\'{o}rdoba, ``Infinite energy solutions of the surface quasi-geostrophic equation'', Adv. Math. 225, 1820--1829 (2010).

\bibitem[Castro et~al.(2010)Castro, C\'{o}rdoba, and Gancedo]{ccg2010} A. Castro, D. C\'{o}rdoba, and F. Gancedo, ``Singularity formations for a surface wave model'', Nonlinearity 23, 2835--2847 (2010).

\bibitem[Chae et~al.(2005)Chae, C\'{o}rdoba, C\'{o}rdoba, and Fontelos]{cccf2005} D. Chae, A. C\'{o}rdoba, D. C\'{o}rdoba, and M. A. Fontelos, ``Finite time singularities in a 1D model of the quasi-geostrophic equation'', Adv. Math. 194, 203--223 (2005).

\bibitem[Constantin et~al.(1996)Constantin, Fefferman, and Majda]{cfm1996} P. Constantin, C. Fefferman, and A. Majda, ``Geometric constraints on potentially singular solutions for the 3-D Euler equations'', Commun. PDEs 21, 559--571 (1996).

\bibitem[Constantin et~al.(1985)Constantin, Lax, and Majda]{clm1985} P. Constantin, P. D. Lax, and A. Majda, ``A simple one-dimensional model for the three-dimensional vorticity equation'', Comm. Pure Appl. Math. 38, 715--724 (1985).

\bibitem[C\'{o}rdoba et~al.(2005)C\'{o}rdoba, C\'{o}rdoba, and Fontelos]{ccf2005} A. C\'{o}rdoba, D. C\'{o}rdoba, and M. A. Fontelos, ``Formation of singularities for a transport equation with nonlocal velocity'', Ann. Math. 162, 1377--1389 (2005).

\bibitem[C\'{o}rdoba et~al.(2006)C\'{o}rdoba, C\'{o}rdoba, and Fontelos]{ccf2006} A. C\'{o}rdoba, D. C\'{o}rdoba, and M. A. Fontelos, ``Integral inequalities for the Hilbert transform applied to a nonlocal transport equation'', J. Math. Pures Appl. 86, 529--540 (2006).

\bibitem[De Gregorio(1990)De Gregorio]{degreg1990} S. De Gregorio, ``On a one-dimensional model for the three-dimensional vorticity equation'', J. Stat. Phys. 59, 1251--1263 (1990).

\bibitem[De Gregorio(1996)De Gregorio]{degreg1996} S. De Gregorio, ``A partial differential equation arising in a 1D model for the 3D vorticity equation'', Math. Methods Appl. Sci. 19, 1233--1255 (1996).

\bibitem[Deng et~al.(2005)Deng, Hou, and Yu]{dhy2005} J. Deng, T. Y. Hou, and X. Yu, ``Geometric properties and non-blowup of 3D incompressible Euler flow'', Commun. PDEs 30, 225--243 (2005).

\bibitem[Kato and Lai(1984)Kato and Lai]{kl1984} T. Kato and C. Y. Lai, ``Nonlinear evolution equations and the Euler flow'', J. Funct. Anal. 56, 15--28 (1984).

\bibitem[Luo and Hou(2013)Luo and Hou]{lh2013} G. Luo and T. Y. Hou, ``Potentially singular solutions of the 3D incompressible Euler equations'', \texttt{http://arxiv.org/abs/1310.0497} (2013).

\bibitem[Majda and Bertozzi(2002)Majda and Bertozzi]{mb2002} A. Majda and A. Bertozzi, Vorticity and Incompressible Flow, Cambridge University Press (2002).

\bibitem[Okamoto et~al.(2008)Okamoto, Sakajo, and Wunsch]{osw2008} H. Okamoto, T. Sakajo, and M. Wunsch, ``On a generalization of the Constantin-Lax-Majda equation'', Nonlinearity 21, 2447--2461 (2008).

\end{thebibliography}
\end{document}